\documentclass{article}

\usepackage{amsmath, amsthm, amssymb}
\usepackage{enumerate}
\usepackage{lineno}
\usepackage{hyperref}
\usepackage{authblk}
\usepackage[ruled,vlined,linesnumbered,resetcount,algosection]{algorithm2e}
\usepackage{color}
\usepackage{tikz}
\usepackage{comment}
\pgfdeclarelayer{edgelayer}
\pgfdeclarelayer{nodelayer}
\pgfsetlayers{edgelayer,nodelayer,main}
\usepackage[a4paper, total={6in, 8in}]{geometry}

\newtheorem{theorem}{Theorem}[section]
\newtheorem{definition}[theorem]{Definition}
\newtheorem{remark}[theorem]{Remark}
\newtheorem{lemma}[theorem]{Lemma}

\DeclareMathOperator{\BoxProduct}{\mathbin{\Box}}
\DeclareMathOperator{\diam}{diam}
\DeclareMathOperator{\rad}{rad}
\DeclareMathOperator{\girth}{g}
\DeclareMathOperator{\e}{ecc}
\DeclareMathOperator{\aug}{aug}

\newcommand{\svSpan}[1]{\sigma^{\boxtimes}_V(#1)}
\newcommand{\seSpan}[1]{\sigma^{\boxtimes}_E(#1)}
\newcommand{\dvSpan}[1]{\sigma^{\times}_V(#1)}
\newcommand{\deSpan}[1]{\sigma^{\times}_E(#1)}
\newcommand{\cvSpan}[1]{\sigma^{\BoxProduct}_V(#1)}
\newcommand{\ceSpan}[1]{\sigma^{\BoxProduct}_E(#1)}

\begin{document}
\title{Graphs with span 1 and shortest optimal walks}
\author{
\and Tanja Dravec$^{a,b}$\thanks{tanja.dravec@um.si}, Mirjana Mikalački$^{c}$\thanks{mirjana.mikalacki@dmi.uns.ac.rs}, Andrej Taranenko$^{a,b}$\thanks{andrej.taranenko@um.si}
}

\date{\today}

\maketitle

\begin{center}
$^a$ Faculty of Natural Sciences and Mathematics, University of Maribor, Slovenia \\
$^b$ Institute of Mathematics, Physics and Mechanics, Ljubljana, Slovenia\\
$^c$  Department of Mathematics and Informatics, Faculty of Sciences, University of Novi Sad, Serbia
\end{center}

\begin{abstract}
A span of a given graph $G$ is the maximum distance that two players can keep at all times while visiting all vertices (edges) of $G$ and moving according to certain rules, that produce different variants of span. We prove that the vertex and edge span of the same variant can differ by at most 1 and present a graph where the difference is exactly 1. For all variants of vertex span we present a lower bound in terms of the girth of the graph. Then we study graphs with the strong vertex span equal to 1. We present some nice properties of such graphs and show that interval graphs are contained in the class of graphs having the strong vertex span equal to 1. Finally, we present an algorithm that returns the minimum number of moves needed such that both players traverse all vertices of the given graph $G$ such that in each move the distance between players equals at least the chosen span of $G$. 
\end{abstract}

\section{Introduction and basic definitions}
The notion of span of a continuum has been introduced by Lelek \cite{Lelek} in 1964 and has been a popular topic of research in continuum theory. The idea of a span of a continuum was recently translated to an equivalent in graph theory by Banič and Taranenko \cite{BaTa23}. It can be interpreted in the language of games on graphs as follows. Two players, say Alice and Bob, are moving through a given graph, restricted by some movement rules, and wish to traverse all vertices (or edges) of the graph. At each point in time, the distance between the two players on the graph is measured. The minimum obtained over all moments in time is the safety distance Alice and Bob were able to maintain. The question asked is, what is the maximum possible safety distance the two players are able to maintain at each point in time. Depending on the set of movement rules and whether they wish to visit all vertices or edges, the obtained value corresponds to a span of the given graph.

The movement rules commonly used in games in graphs are: \emph{the traditional movement rules} where at each point in time the two players independently of each other decide to either stay at their current vertex or move to an adjacent one,  \emph{the active movement rules} where at each point in time each player must move to an adjacent vertex, and finally  \emph{the lazy movement rules} where at each point in time exactly one player must move to an adjacent vertex.

In the seminal paper, Banič and Taranenko \cite{BaTa23} formally introduce the notion of spans of a graph. They present a characterisation of spans with respect to subgraphs of graph products and show that the value of a chosen span can be obtained in polynomial time. Also, 0-span graphs are characterised for each span variant. Erceg et al. \cite{Erceg23} continued the study of spans of a graph. In their paper the relation between different vertex span variants was studied, and spans were also studied for specific families of graphs. Šubašić and Vojković \cite{SuVo24} determined the values of all variants of vertex spans for multilayered graphs and their subclasses, i.e. multilayered cycles and multilayered paths. Lasty, in \cite{SuVoAX} also studied the minimum number of moves players must make in order to traverse all vertices/edges of a given graph while still maintaining the distance at least the corresponding span at all times.

In this paper we continue the study of spans of a graph. In the remainder of this section we define basic notation and state the results needed throughout the paper. In Section \ref{sec:bounds} we show that the vertex and the edge span of the same variant can differ at most by one, and show that such graphs exist. For all vertex span variants we also provide a lower bound in terms of the girth of the graph. In Section \ref{sec:span1} we study graphs with the strong vertex span equal to 1, where we present some nice properties of such graphs and provide a construction of an infinite family of graphs with the strong vertex span equal to 1. In the last section, we present an algorithm for determining the minimum number of steps needed in optimal walks.

Let $G$ be a connected graph and $v$ a vertex of $G$. The \emph{eccentricity} of the vertex $v$, denoted $\e(v)$ is the maximum distance from $v$ to any vertex of $G$. That is, $\e(v)=\max\{d_G(v,u) \mid u \in V(G)\}.$ The \emph{radius} of $G$, denoted $\rad(G)$, is the minimum eccentricity among the vertices of $G$. Therefore, $\rad(G)=\min\{\e(v) \mid v \in V(G)\}.$ The \emph{diameter} of $G$, denoted $\diam(G)$, is the maximum eccentricity among the vertices of $G$, thus, $\diam(G)=\max\{\e(v) \mid v \in V(G)\}.$ 

Let $G$ be a graph and $S\subseteq V(G)$. The subgraph of $G$ induced by $S$ is denoted by $G[S]$. The set $S$ is called a \emph{clique} if $G[S]$ is a complete graph. $S$ is a \emph{cut set} of $G$ if $G-S$ has more components than $G$. A cut set $S$ of a connected graph $G$ is called a \emph{minimal cut set} if no proper subset of $S$ is a cut set of $G$.
For two vertex disjoint graphs $G$ and $H$, the \emph{join} of graphs $G$ and $H$, $G \vee H$, is the graph with the vertex set $V(G) \cup V(H)$ and edge set $E(G) \cup E(H)\cup \{gh \mid g \in V(G),h \in V(H)\}.$

Let $G$ and $H$ be any graphs. A function $f: V(G) \rightarrow V(H)$ is a \emph{weak homomorphism from $G$ to $H$} if  for all $u,v\in V(G)$, $uv\in E(G)$ implies $f(u) f(v) \in E(H)$ or $f(u)=f(v)$. We will use the more common notation $f:G\rightarrow H$ to say that $f: V(G) \rightarrow V(H)$ is a weak homomorphism. A weak homomorphism $f: G \rightarrow H$ is \emph{surjective} if $f(V(G)) = V(H)$. A weak homomorphism $f: G \rightarrow H$ is \emph{edge surjective} if it is surjective and for every $uv \in E(H)$ there exists an edge $xy \in E(G)$ such that $u=f(x)$ and $v=f(y)$. For a weak homomorphism $f: G \rightarrow H$, the \emph{image $f(G)$ of the graph $G$} is the graph defined by $V(f(G)) = \{ f(u)  \mid u \in V(G)\}$ and $E(f(G)) = \{ f(u)f(v)  \mid uv\in E(G) \text{ and } f(u) \neq f(v)\}.$

Let $f,g: G \rightarrow H$ be weak homomorphisms. The \emph{distance from $f$ to $g$} is defined as $m_G(f,g) = \min \{ d_H(f(u), g(u))  \mid u \in V(G) \}$.  If $f,g: G \rightarrow H$ are surjective weak homomorphisms and $G$ is connected, then $m_G(f,g)\leq \rad(H)$ \cite{BaTa23}.

We are now ready to state the definitions of all span variants (depending on the movement rules and the goal: traversing all vertices or edges) as defined in \cite{BaTa23}. 

\begin{definition}\label{def:strongSpans}
Let $H$ be a connected graph. 
Define 
\begin{align*}
  \seSpan{H} = \max \{ m_P(f,g)  \mid &f,g: P \rightarrow H \text{ are edge surjective weak  homomorphisms and $P$ is a path} \}.  
\end{align*}
     
We call $\seSpan{H}$ the \emph{strong edge span} of the graph $H$.

Define 
\begin{align*}
     \svSpan{H} = \max \{ m_P(f,g)  \mid &f,g: P \rightarrow H \text{ are surjective weak homomorphisms and $P$ is a path} \}. 
\end{align*}
 
We call $\svSpan{H}$ the \emph{strong vertex span} of the graph $H$.
\end{definition}

The idea of using weak homomorphisms is easy to explain.  Walks of graphs parameterised by time can be presented by paths, where two adjacent vertices represent consecutive points in time. A mapping from a path to the given graph defined by a weak homomorphism represents one walk on the given graph. Since by a weak homomorphism two adjacent vertices on a path can be mapped to the same vertex, this can be seen as the fact that the player did not move at this point in time; on the other hand, if two adjacent vertices on a path are mapped to two adjacent vertices on the graph, this means the player moved to an adjacent vertex. The condition that we consider only surjective (edge surjective) weak homomorphisms implies that the walk considered is a walk through all vertices (edges) of the graph. Therefore this definition corresponds to the traditional movement rules, in the notation represented by the symbol $\boxtimes$ (see \cite {BaTa23} for a more detailed explanation on reasons for such notations). So two weak (edge) surjective homomorphisms $f$ and $g$ in Definition \ref{def:strongSpans} represent a pair of walks through all (edges) vertices of the graph, one for each player. It is not difficult to see that the value $m_P(f,g)$ is equal to the minimum of the distances between the players over all points in time. Considering all possible valid walks, we obtain the value of the strong (edge) vertex span. Throughout the paper we will use the language of walks to study the values of different span.

For the active and lazy movement rules the definitions are similar. To compensate for the rules change, additional properties of weak homomorphisms are required. The general idea remains the same as in the strong span variant.

\begin{definition}
Let $f,g:G\rightarrow H$ be weak homomorphisms. We say that $f$ and $g$ are \emph{aligned} weak homomorphisms, if for any $uv\in E(G)$,
\begin{displaymath}
f(u)f(v)\in E(H)    \Longleftrightarrow     g(u)g(v)\in E(H).
\end{displaymath}
\end{definition}

\begin{definition}\label{def:directSpans}
Let $H$ be a connected graph. 
Define 
\begin{align*}
  \deSpan{H} = \max \{ m_P(f,g)  \mid &f,g: P \rightarrow H \text{ are edge surjective aligned weak homomorphisms} \\ &\text{and $P$ is a path} \}.  
\end{align*}

We call $\deSpan{H}$ the \emph{direct edge span} of the graph $H$.

Define 
\begin{align*}
     \dvSpan{H} = \max \{ m_P(f,g)  \mid &f,g: P \rightarrow H \text{ are surjective aligned weak homomorphisms} \\ &\text{and $P$ is a path} \}. 
\end{align*}
 
We call $\dvSpan{H}$ the \emph{direct vertex span} of the graph $H$.
\end{definition}

\begin{definition}
Let $f,g:G\rightarrow H$ be weak homomorphisms. We say that $f$ and $g$ are \emph{opposite} weak homomorphisms, if for any $uv\in E(G)$,
\begin{displaymath}
f(u)f(v)\in E(H)    \Leftrightarrow     g(u)=g(v).
\end{displaymath}
\end{definition}

\begin{definition}\label{def:cartesianSpans}
Let $H$ be a connected graph. 
Define 
\begin{align*}
  \ceSpan{H} = \max \{ m_P(f,g)  \mid &f,g: P \rightarrow H \text{ are edge surjective opposite weak homomorphisms} \\ &\text{and $P$ is a path} \}.  
\end{align*}
     
We call $\ceSpan{H}$ the \emph{Cartesian edge span} of the graph $H$.

Define 
\begin{align*}
     \cvSpan{H} = \max \{ m_P(f,g)  \mid &f,g: P \rightarrow H \text{ are surjective opposite weak homomorphisms} \\ &\text{and $P$ is a path} \}. 
\end{align*}
 
We call $\cvSpan{H}$ the \emph{Cartesian vertex span} of the graph $H$.
\end{definition}

Let $G$ be a connected graph. We use $W: w_0, \ldots, w_k$ to denote a walk $W$ on $G$, i.e. $w_i w_{i+1} \in E(G)$ or $w_i = w_{i+1}$ for any $i\in\{0, \ldots, k-1\}$. A walk $W: w_0, \ldots, w_k$  is called a \emph{surjective vertex walk} if $\{w_i \mid i\in\{0, \ldots, k\}\} = V(G)$. Two surjective vertex walks $W_1: u_0, \ldots, u_k$ and $W_2: v_0, \ldots, v_k$ which achieve the strong vertex span, i.e. $d(u_i, v_i)\geq \svSpan{G}$ for any $i\in\{0, \ldots, k\}$, are called \emph{optimal strong vertex walks}. Analogously, \emph{optimal direct vertex} and \emph{optimal Cartesian vertex walks} are defined. A surjective vertex walk is a \emph{surjective edge walk} if  $\{w_i w_{i+1} \mid i\in\{0, \ldots, k-1\} \text{ and } w_i\not=w_{i+1}\} = E(G)$. Two surjective edge walks $W_1: u_0, \ldots, u_k$ and $W_2: v_0, \ldots, v_k$ which achieve the strong edge span, i.e. $d(u_i, v_i)\geq \seSpan{G}$ for any $i\in\{0, \ldots, k\}$, are called \emph{optimal strong edge walks}. Analogously, \emph{optimal direct edge} and \emph{optimal Cartesian edge walks} are defined. 

\begin{remark}\label{assumeAnyStart}
Let $G$ be a connected graph and $W_A: a_1, a_2, \ldots, a_k$ and $W_B: b_1, b_2, \ldots, b_k$ optimal strong (Cartesian or direct) vertex (edge) walks on $G$. For arbitrary vertices $u,v \in V(G)$
there exist $i, j \in \{1, \ldots, k\}$ such that $u=a_i$ and $v=a_j$. Then the walks $W'_A: a_i, a_{i-1}, \ldots, a_1, a_2,$ $ \ldots, a_k, a_{k-1}, \ldots, a_{j}$ and $W'_B: b_i, b_{i-1}, \ldots, b_1, b_2, \ldots, b_k, b_{k-1}, \ldots, b_j$ are also optimal strong (Cartesian or direct) vertex (edge) walks. So we can always assume that one player starts an optimal walk in a specific vertex and ends it in a specific vertex.
\end{remark}

\section{Bounds for different types of spans}\label{sec:bounds}

In \cite{BaTa23} for a simple graph $G$ the following bounds are given for $\star \in \{\BoxProduct, \times, \boxtimes\}$: $0 \leq \sigma^\star_E(G) \leq \sigma^\star_V(G) \leq \rad(G)$. For any $\star \in \{\BoxProduct, \times, \boxtimes\}$ graphs $G$ with  $\sigma^\star_E(G) = \sigma^\star_V(G)$ exist \cite{BaTa23}. There also exist graphs $G$ for which  $\sigma^\star_E(G) < \sigma^\star_V(G)$, as shown in the following example. The graph $G$ depicted in Figure~\ref{fig:VvsE} has $\sigma^\star_V(G)=2$ and $\sigma^\star_E(G)=1$. Note that Alice (or Bob) cannot traverse edge $e$ end keep distance 2 from each other. 

In this section we show that for any type of span the edge and the vertex variant can differ by at most 1, moreover, the graph in Figure~\ref{fig:VvsE} is an example of a graph where the difference between the edge and the vertex span is exactly 1. For any type of span we also improve the lower bound of the vertex variant with respect to the girth of the corresponding graph.

\begin{figure}[ht!]
	\begin{center}
		\begin{tikzpicture}[style=thick,x=1cm,y=1cm]
    		\def\vr{2.5pt} 
    		\draw (0,0)--(0,3);
    
            \foreach \x in {0,1,2,3}{
                \draw (0,\x)--(-1,1.5);
            }
     
    		\draw (1,1.5)--(0,1);
            \draw (1,1.5)--(0,2);
    
    		\draw (0,0) [fill=black] circle (\vr);
    		\draw (0,1) [fill=black] circle (\vr);
    		\draw (0,2) [fill=black] circle (\vr);
    		\draw (0,3) [fill=black] circle (\vr);
    		\draw (1,1.5) [fill=black] circle (\vr);
            \draw (-1,1.5) [fill=black] circle (\vr);
            \draw (0.1,1.5) node {$e$};
		\end{tikzpicture}
	\end{center}
	\caption{A graph $G$ with $\svSpan{G}=\seSpan{G}+1$}
	\label{fig:VvsE}
\end{figure}

\begin{theorem}
    For any connected graph $G$ the following hold true:
    \begin{enumerate}[(i)]
        \item $\svSpan{G} - \seSpan{G} \leq 1$,
        \item $\cvSpan{G} - \ceSpan{G} \leq 1$, and
        \item $\dvSpan{G} - \deSpan{G} \leq 1$.
    \end{enumerate}
    
\end{theorem}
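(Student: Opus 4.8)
The plan is to establish, for each of the three products $\star\in\{\boxtimes,\BoxProduct,\times\}$, the single inequality $\sigma^\star_E(G)\ge\sigma^\star_V(G)-1$; together with the known bound $\sigma^\star_E(G)\le\sigma^\star_V(G)$ this yields all three statements. Write $s=\sigma^\star_V(G)$. If $s\le 1$ there is nothing to prove, since $\sigma^\star_E(G)\ge 0\ge s-1$, so I may assume $s\ge 2$. Fix optimal vertex walks $W_A:a_0,\dots,a_k$ and $W_B:b_0,\dots,b_k$ for the variant $\star$, so both are surjective vertex walks and $d(a_i,b_i)\ge s$ for every $i$. The goal is to enlarge this pair into a pair of \emph{surjective edge} walks that stay at distance at least $s-1$ at all times. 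Since the two walks must share a common length, every modification made to one player will be compensated by a matching move of the other.

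The basic tool is an \emph{edge detour}. Suppose that at some time one player sits at a vertex $u$, that $uv\in E(G)$ is an edge still to be covered by that player, and that the partner sits at a vertex $w$ with $d(u,w)\ge s$. Letting the first player perform $u,v,u$, the triangle inequality gives $d(v,w)\ge d(u,w)-1\ge s-1$, so provided the partner can remain at $w$ during the two inserted steps the distance never drops below $s-1$. As $W_A$ and $W_B$ are vertex-surjective, every endpoint of every edge is visited by each player, so inserting such detours (and the symmetric ones with the roles of the players exchanged, padding the idle player accordingly) forces both walks to traverse every edge. This settles $\boxtimes$ and $\BoxProduct$ simultaneously: in the strong variant a player may always stay put, while in the Cartesian variant the ``exactly one player moves'' rule makes the partner stay \emph{automatically} while the detouring player moves, which is precisely what the gadget requires, and both the original steps and all inserted steps respect the opposite condition. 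Using Remark~\ref{assumeAnyStart} to position the walks freely, the bookkeeping is routine.

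The direct variant is the main obstacle, because the aligned rule forbids the partner from staying fixed while the other player crosses $uv$: both must move on the same step. If the partner is pushed to an arbitrary neighbour $w'$ of $w$, the triangle inequality only gives $d(v,w')\ge d(u,w)-2\ge s-2$, one short of the target. The resolution is to exploit the partner's \emph{forced} move rather than fight it. If $w$ has a neighbour $w'$ with $d(u,w')=d(u,w)+1$ (the partner can step away from $u$), then $d(v,w')\ge d(u,w')-1=d(u,w)\ge s$, so the joint move loses nothing; and if some neighbour $w'$ has $d(u,w')=d(u,w)$, then $d(v,w')\ge d(u,w)-1\ge s-1$, which still suffices. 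The only dangerous configuration is when $w$ is a \emph{strict local maximum} of $d(\cdot,u)$, i.e. every neighbour is strictly closer to $u$ (for instance a leaf), in which case crossing from that position drops the distance to $s-2$.

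Thus the whole difficulty concentrates in avoiding crossings made from such strict local maxima. The plan is to remove them using the available freedom: the time of the crossing, its orientation ($u\to v$ versus $v\to u$, since each player visits both endpoints), and, if needed, a short preliminary phase in which the crossing player oscillates on an incident edge while the partner walks to a vertex lying just ``below'' a peak, so that during the crossing the partner \emph{ascends} toward the peak and the two unit displacements cancel. Checking that such a favourable joint position is always reachable without the distance ever falling below $s-1$ is the technical heart of the argument; I expect this reachability verification to be the only genuinely delicate step. Once it is in place, the same detour-and-pad bookkeeping used for $\boxtimes$ and $\BoxProduct$ produces direct edge walks at distance at least $s-1$, giving $\dvSpan{G}-\deSpan{G}\le 1$.
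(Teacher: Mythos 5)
Your treatment of (i) and (ii) is correct and matches the paper's own argument: the $u,v,u$ detour with the partner stationary is exactly what is used there, and your observation that the Cartesian rule makes the partner's immobility automatic during the detour is right. The genuine gap is in (iii), and you have in effect named it yourself: your plan for the direct variant hinges on a ``reachability verification'' --- that a crossing never has to be executed while the partner sits at a strict local maximum of $d(\cdot,u)$ --- which you do not carry out. This is not a routine bookkeeping step along the route you chose. To guarantee a favourable joint position you would need global control over where \emph{both} players stand at the moment each uncovered edge is crossed, and your preliminary phase (crossing player oscillates while the partner walks toward a ``peak'') must itself be certified to keep the distance at least $s-1$ under forced simultaneous moves, which is circular with the very problem being solved. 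As written, part (iii) is a program, not a proof.

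The paper dissolves the difficulty with a device your sketch does not find: offset the two optimal direct vertex walks by one unit of time. With $W: u_0,\dots,u_t$ and $W': v_0,\dots,v_t$ satisfying $d(u_i,v_i)\ge k$ for all $i$, it takes Alice's walk to be $u_0,u_1,u_0,u_1,u_2,\dots,u_t$ and Bob's to be $v_1,v_0,v_1,v_2,\dots,v_t,v_{t-1}$. Then at every time Alice stands at some $u_i$ while Bob stands at $v_{i-1}$ or $v_{i+1}$, a neighbour of $v_i$, hence at distance at least $k-1$ by the triangle inequality, and both players move at every step, as the aligned rule demands. To cover an edge $u_ix'$ missing from Alice's walk, she inserts the detour $u_i,x',u_i$ while Bob inserts the oscillation $v_{i+1},v_i,v_{i+1}$ along an edge of \emph{his own} vertex walk; the critical pair $(x',v_i)$ satisfies $d(x',v_i)\ge d(u_i,v_i)-1\ge k-1$ since $x'\in N(u_i)$, and Bob's missing edges are handled symmetrically with Alice oscillating on $u_{j-1}u_j$. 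So no analysis of local maxima of the distance function is needed at all: because the partner's oscillation edge is always drawn from the partner's own optimal walk one step out of phase, your dangerous configuration simply never arises. If you replace your avoidance scheme by this offset-and-oscillate construction, part (iii) closes and the rest of your argument stands.
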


\begin{proof}
\emph{(i)} Let Alice and Bob use two optimal strong vertex walks and let $e=uv$ be an arbitrary edge of $G$. At some point in time Alice (Bob) reaches the vertex $u$, with the distance at least $\svSpan{G}$ from Bob (Alice). Alice (Bob) can move to $v$ and back, while Bob (Alice) does not move. Therefore their distance when Alice (Bob) is at vertex $v$ is at least $\svSpan{G}-1$. Thus, Alice and Bob can visit all the edges of $G$ while being at distance at least $\svSpan{G}-1$.

\emph{(ii)} The same reasoning as in \emph{(i)} can be used for this case, starting with two optimal Cartesian vertex walks.

\emph{(iii)} Let $\dvSpan{G}=k$ and let $W: u_0, \ldots, u_t$, $W': v_0, \ldots, v_t$ be two optimal direct vertex walks for Alice and Bob. Hence, $d_G(u_i,v_i) \geq k$ for any $i \in \{0, \ldots, t\}$, and  $u_i u_{i+1}, v_i v_{i+1} \in E(G)$ for any $i \in \{0, \ldots, t-1\}$. Note also that $d_G(u_i,v_{i+1})\geq k-1$ and $d_G(v_i,u_{i+1}) \geq k-1$  for any $i \in \{0, \ldots, t-1\}$, even more for any neighbor $x$ of $u_i$ and any neighbor $y$ of $v_i$ it holds that $d_G(u_i,y) \geq k-1$ and $d_G(v_i,x) \geq k-1$.

Let Alice's walk be $U: u_0, u_1, u_0, u_1, u_2, \ldots, u_t$ and Bob's walk be $U': v_1, v_0, v_1, v_2, \ldots, v_t, v_{t-1}$. We extend these two walks (through all the vertices of $G$) to walks through all the edges of $G$ in the following way. For any edge $e=xx' \in E(G)\setminus E(W)$ there exists $i \in \{0, 1, \ldots, t\}$ such that $x=u_i$. Extend $U$ and $U'$ as follows: $U: u_0, u_1, u_0, u_1, \ldots, u_i, x', u_i, u_{i+1}, \ldots, u_t$ and $U': v_1, v_0, v_1, v_2, \ldots, v_{i+1}, v_i, v_{i+1}, \ldots, v_t, v_{t-1}$. In the same way, if $e=yy' \in E(G)\setminus E(W')$, then there exists $j\in \{2, \ldots, t\}$ (note that the  edge $v_0 v_1$ is already in $W'$ thus if one of $y$ or $y'$ equals $v_0$ or $v_1$ the other vertex is $v_j$ for some $j \geq 2$) such that $y=v_j$. Then $v_j, v_{j+1}$ part of $U'$ can be extended to $v_j, y', v_j, v_{j+1}$ and similarly $u_{j-1}, u_j$ part of $U$ by $u_{j-1}, u_j, u_{j-1}, u_j$. Since for any neighbour $x$ of $u_i$ and any neighbour $y$ of $v_i$ it holds that $d_G(u_i,y) \geq k-1$ and $d_G(v_i,x) \geq k-1$, Alice and Bob can stay at distance at least $k-1$ if they follow the extended $U$ and $U'$, respectively.
\end{proof}


\begin{theorem}
    Let $G$ be a connected graph with at least one cycle and $\girth(G)$ be the girth of the graph $G$. 
    Then the following hold true:
    \begin{enumerate}[(i)]
        \item $\svSpan{G} \geq \left\lfloor \frac{\girth(G)}{2} \right\rfloor$,
        \item $\cvSpan{G} \geq \left\lfloor \frac{\girth(G)}{2} \right\rfloor - 1$, and
        \item $\dvSpan{G} \geq \left\lfloor \frac{\girth(G)}{2} \right\rfloor - 1$. 
    \end{enumerate}
\end{theorem}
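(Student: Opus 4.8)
The plan is to prove each inequality by exhibiting an explicit pair of surjective vertex walks whose pairwise distance never drops below the claimed bound, using a shortest cycle as an anchor. Let $C\colon c_0c_1\cdots c_{g-1}c_0$ be a shortest cycle of $G$, where $g=\girth(G)$, and put $r=\lfloor g/2\rfloor$. The first ingredient I would record is that a shortest cycle is isometric: for all $i,j$ the distance $d_G(c_i,c_j)$ equals the cyclic distance $\min(|i-j|,\,g-|i-j|)$, so in particular $d_G(c_i,c_{i+r})=r$ (indices taken mod $g$). This makes $C$ a reliable place to keep the two players exactly $r$ apart.

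For \emph{(i)} the core move is the \emph{antipodal rotation}: place Alice at $c_i$ and Bob at $c_{i+r}$ and let $i$ run through $0,1,\dots,g-1$. Both players then traverse all of $C$, each consecutive step is a legal strong move (each player moves to a neighbour), and by isometry the distance stays exactly $r$ throughout. It remains to visit the off-cycle vertices. For an off-cycle vertex $x$ I would let the visiting player walk out to $x$ and back while the partner waits on $C$ at a vertex far from $x$, and then swap roles so that both players eventually reach every vertex. Two subclaims make this precise. First, every vertex $x$ satisfies $\max_j d_G(x,c_j)\ge r$, so a safe parking vertex on $C$ always exists; I expect to derive this from the girth, since short paths from $x$ to two cycle vertices that are near each other, closed up along the appropriate arc of $C$, would produce a cycle shorter than $g$. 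Second, because the strong rules permit a player to stay put, the visiting player can pause while the partner shuffles one cycle-edge at a time toward a safe vertex, so the distance never drops below $r$.

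Parts \emph{(ii)} and \emph{(iii)} reuse the same anchor and the same excursions, paying for the movement restriction with the single unit lost in the bound. Under the lazy (Cartesian) rules exactly one player may move per step, so the simultaneous antipodal rotation must be serialized: passing from $(c_i,c_{i+r})$ to $(c_{i+1},c_{i+r+1})$ goes through the intermediate configuration $(c_{i+1},c_{i+r})$, whose distance is $r-1$; hence in $G\BoxProduct G$ the rotation certifies only $r-1$ (parking during excursions is still legal and costs nothing extra). Under the active (direct) rules no player may stay, so the rotation itself survives with distance $r$ in $G\times G$, but the parking used in the excursions must be replaced by oscillating back and forth along one edge; a partner oscillating at a vertex that was at distance $\ge r$ from $x$ remains within distance $\ge r-1$ of $x$, again costing exactly one unit. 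In both cases the off-cycle vertices are handled just as in \emph{(i)}, with the bookkeeping adjusted to respect the movement rule, and Remark~\ref{assumeAnyStart} lets me choose convenient start points when concatenating the rotation sub-walk with the excursion sub-walks.

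The main obstacle I anticipate is the excursion step, not the rotation. For a fixed position of the visiting player the set of safe parking vertices on $C$ need not form a single arc, so while that player is frozen the partner may be unable to slide from one safe vertex to another without passing through a vertex that is too close. To handle this cleanly I would process the off-cycle vertices in order of increasing distance from $C$ and maintain, as an invariant, a safe partner position that can be repaired by a single step each time the visiting player takes a single step; showing that this repair is always possible is exactly where the girth bound on short cycles does the real work. Once the excursions are under control, assembling the rotation and excursion pieces into one pair of walks finishes all three parts.
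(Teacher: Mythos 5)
Your proposal follows essentially the same route as the paper: place the players antipodally on a shortest cycle $C$, cover $C$ by a simultaneous (respectively serialized for the lazy rules, forced for the active rules) rotation, let one player make excursions to off-cycle vertices while the partner maintains distance moving only on $C$, and use a girth-based short-cycle contradiction to show the on-cycle player can always respond, with the lazy and active variants each costing exactly one unit of distance. The single step you defer as the ``main obstacle'' --- that the partner's position can always be repaired by one move (stay, or step to a cycle-neighbour) --- is precisely the paper's central computation, namely that if the roaming player's next vertex were within distance $d-1$ of $b_t$ and of both of its neighbours $u_1,u_2$ on $C$, then shortest paths to these vertices closed up with the cycle edges yield a cycle of length at most $2d-1$ (respectively $2d$), shorter than the girth; with that invariant in place your auxiliary static ``far parking vertex'' subclaim is unnecessary, so the plan is sound and matches the paper's argument.
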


\begin{proof}
Let $C$ be a shortest cycle of $G$ and let $a_t$ and $b_t$ denote Alice's and Bob's positions at the time $t$, respectively. 

\emph{(i)} Let $d = \left\lfloor \frac{\girth(G)}{2} \right\rfloor$ and let Alice and Bob start their walks on antipodal vertices of $C$. They can clearly traverse all vertices of $C$ by both moving on $C$ at the same time in the same direction and keep the distance at least $d$. We claim that while Alice visits the vertices of the graph that are not on the cycle $C$, Bob can maintain the distance at least $d$ by only moving on the cycle $C$. Toward contradiction suppose, that up to some point in time, say $t$, Alice and Bob were able to maintain the distance at least $d$, and next, Alice wants to move from $a_t$ to a vertex $a_{t+1}$ that is at distance $d-1$ from $b_t$, and at most $d-1$ from both neighbours of $b_t$ on $C$, say $u_1$ and $u_2$ (otherwise Bob can move to one of these neighbours). But then any shortest $a_{t+1}, b_t$-path, $a_{t+1}, u_1$-path and the edge $b_t u_1$ generate a cycle of length at most $2d-1= 2 \left\lfloor \frac{\girth(G)}{2} \right\rfloor - 1 \leq \girth(G) - 1$. This contradicts the fact that the shortest cycle is of length $\girth(G)$.

\emph{(ii)} Let $d = \left\lfloor \frac{\girth(G)}{2} \right\rfloor - 1$ and let Alice and Bob start their walks on antipodal vertices of $C$. They can clearly traverse all vertices of $C$ by both moving on $C$ in the same direction while taking turns moving and keep the distance at least $d$. Note, when Alice (Bob) moves, then Bob (Alice) stays at the current position. Similarly to (i), we claim that while Alice visits the vertices of the graph that are not on the cycle $C$, Bob can maintain the distance at least $d$ by only moving on the cycle $C$. Toward contradiction suppose, that up to some point in time, say $t$, Alice and Bob were able to maintain the distance at least $d$ while obeying the movement rules and Bob only moved on $C$. Suppose next, Alice wants to move to a neighbour of $a_t$, say $a'$, and by doing so cannot keep the distance at least $d$ from Bob, i.e. $d_G(b_t, a')=d-1$, and Bob cannot move (before Alice moves in this turn) to any of the two neighbours of $b_t$ on $C$, say $u_1$ and $u_2$, and maintain distance at least $d$ with Alice. Thus, $d_G(a_t, u_1)=d_G(a_t, u_2)=d-1$. This implies that any shortest $a_t, u_1$-path, $a_t, u_2$-path and the edges $b_t u_1$ and $b_t u_2$ generate a cycle of length at most $(d-1)+(d-1)+2 = 2d = 2 \left\lfloor \frac{\girth(G)}{2} \right\rfloor -2 \leq 2 \frac{\girth(G)}{2} - 2 \leq \girth(G) - 2$, again a contradiction to $C$ being a shortest cycle of $G$.

\emph{(iii)} Note, at each point in time Alice and Bob must both move to an adjacent vertex. Again let Alice and Bob start their walks on antipodal vertices of $C$. They can clearly traverse all vertices of $C$ by moving on $C$ at the same time in the same direction and keep the distance at least $d$. We claim that while Alice visits the vertices of the graph that are not on the cycle $C$, Bob can maintain the distance at least $d$ by only moving on the cycle $C$. Toward contradiction suppose, that up to some point in time, say $t$, Alice and Bob were able to maintain the distance at least $d$ while Bob was moving only on $C$. Next, Alice wants to move from $a_t$ to a vertex $a_{t+1}$ that is at distance at most at most $d-1$ from both neighbours of $b_t$ on $C$, say $u_1$ and $u_2$. This implies that any shortest $a_{t+1}, u_1$-path, $a_{t+1}, u_2$-path and the edges $b_t u_1$ and $b_t u_2$ generate a cycle of length at most $(d-1)+(d-1)+2 = 2d = 2 \left\lfloor \frac{\girth(G)}{2} \right\rfloor -2 \leq 2 \frac{\girth(G)}{2} - 2 \leq \girth(G) - 2$, again a contradiction to $C$ being a shortest cycle of $G$.
\end{proof}

\section{On graphs with the strong vertex span 1}\label{sec:span1}

In the first part of this section we show some necessary conditions for graphs with the strong vertex span equal to 1. For this we will often use the following straight forward result.

\begin{lemma}\label{l:minCutNeighboursEverywhere}
    If $G$ is a connected graph and $S$ its minimal cut set, then every vertex of $S$ has a neighbour in each component of $G-S$.
\end{lemma}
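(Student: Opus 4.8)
The plan is to argue by contradiction using the defining property of a minimal cut set, namely that removing any single vertex from $S$ destroys the cut. First I would record the basic setup: since $G$ is connected and $S$ is a cut set, $G-S$ has at least two components; let these be denoted $C_1, C_2, \ldots, C_r$ with $r \geq 2$. The key structural fact I will lean on is that distinct components of $G-S$ have no edges between them, so in $G$ every path joining two different components must pass through a vertex of $S$.

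Next I would set up the contradiction. Suppose some vertex $s \in S$ has no neighbour in some component, say $C_1$, of $G-S$. Consider $S' = S \setminus \{s\}$. Because $S$ is a \emph{minimal} cut set, $S'$ is not a cut set, and since $G$ is connected this forces $G - S'$ to be connected. The heart of the argument is to show this is impossible. In $G - S'$ the only vertex of $S$ that survives is $s$, so any vertex of $C_1$ can only be adjacent (within $G-S'$) to other vertices of $C_1$ or possibly to $s$. The first possibility keeps us inside $C_1$; the second is ruled out by the assumption that $s$ has no neighbour in $C_1$. Hence no edge of $G-S'$ leaves the vertex set of $C_1$, so $C_1$ is a union of connected components of $G - S'$ with no connection to the remaining vertices (which include $s$ and the nonempty $C_2$). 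Therefore $G - S'$ is disconnected, contradicting that $S'$ is not a cut set.

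Concluding, the assumption that some $s \in S$ misses a component is untenable, so every vertex of $S$ has a neighbour in each component of $G-S$. I do not anticipate a genuine obstacle here; the only point requiring a little care is the verification that in $G - S'$ no edge escapes $C_1$, which combines two observations — that there are no edges between different components of $G-S$, and that $s$ was assumed to have no neighbour in $C_1$ — together with the fact that $C_2$ provides witnesses outside $C_1$ so that $G-S'$ genuinely splits.
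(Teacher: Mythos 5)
Your proof is correct and follows essentially the same route as the paper: remove the offending vertex $s$ from $S$, set $S' = S \setminus \{s\}$, and observe that since no edge of $G-S'$ leaves $C_1$ (no edges join distinct components of $G-S$, and $s$ has no neighbour in $C_1$), the graph $G-S'$ is disconnected, so $S'$ is a cut set contradicting minimality. Your write-up merely spells out the edge-verification that the paper's shorter proof leaves implicit.
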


\begin{proof}
Let $G_1, G_2, \ldots, G_k$, where $k\geq 2$, be the components of $G-S$. Towards contradiction suppose that there exists $x \in S$ with no neighbour in $G_i$ for some $i \in \{1,\ldots , k\}$. Set $S'= S \setminus \{x\}$. Clearly, $S' \subseteq S$ and $G-S'$ has at least two components: $G_i$ and the component containing $x$, thus $S'$ is a cut set, contradicting the fact that $S$ is a minimal cut set.
\end{proof}

Let $G$ be a connected graph and $S\subseteq V(G)$. An $S$-lobe of $G$ is an induced subgraph of $G$ whose vertex set consists of $S$ and all the vertices of a component of $G-S$. If $S$ is not a cut set, then there is exactly one $S$-lobe, namely the graph $G$. If $S$ is a cut set of a connected graph $G$ and $G_1, \ldots, G_k$ the components of $G-S$, then for any $i \in \{1, \ldots, k\}$ the subgraph of $G$ induced by $V(G_i) \cup S$ is an $S$-lobe of $G$. 


\begin{lemma}\label{l:SlobeSpan=1}
    If $G$ is a connected graph with $\svSpan{G}=1$, then for any minimal cut set $S$ all $S$-lobes have the strong vertex span 1. Moreover, if $H_1, \ldots, H_k$ are $S$-lobes of $G$, then $\svSpan{G[V(H_{i_1}) \cup \ldots \cup V(H_{i_j})]}=1$ for any $\{i_1,\ldots ,i_j\} \subseteq \{1, \ldots, k\}$.
\end{lemma}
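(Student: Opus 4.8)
The plan is to prove the two inequalities $\svSpan{G'}\ge 1$ and $\svSpan{G'}\le 1$ for $G':=G[V(H_{i_1})\cup\cdots\cup V(H_{i_j})]$; the first assertion of the lemma (each single $S$-lobe) is just the case $j=1$, so it suffices to treat a general union of lobes. I would first record two facts used throughout. (a) $G'$ is connected: every vertex of $S$ has a neighbour in each component of $G-S$ by Lemma~\ref{l:minCutNeighboursEverywhere}, so any two vertices of $G'$ are joined through $S$ and any retained component. (b) Since $G'$ is an induced subgraph of $G$, for $x,y\in V(G')$ we have $d_{G'}(x,y)\ge 2$ if and only if $d_G(x,y)\ge 2$ (both say $x\ne y$ and $xy\notin E(G)$); moreover if $u$ lies in a component $G_a$ of $G-S$ and $w$ in a different component $G_b$, then $d_G(u,w)\ge 2$, because every $u$--$w$ walk meets $S$.

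For the lower bound $\svSpan{G'}\ge 1$ I would take a spanning tree of $G'$ and its DFS closed walk $\pi\colon p_0,p_1,\dots,p_L$ with $p_L=p_0$ and $L=2(|V(G')|-1)\ge 2$; this walk visits every vertex and never repeats a vertex on two consecutive steps. Letting Alice follow $a_t=p_t$ and Bob follow $b_t=p_{t+1}$ for $t=0,\dots,L-1$ produces two surjective vertex walks with $a_t=p_t\ne p_{t+1}=b_t$ at every time, so they keep distance at least $1$, giving $\svSpan{G'}\ge 1$.

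For the upper bound I would argue the contrapositive: if $\svSpan{G'}\ge 2$ then $\svSpan{G}\ge 2$, contradicting $\svSpan{G}=1$. Fix optimal strong vertex walks for Alice and Bob on $G'$; by fact~(b) they keep distance at least $2$ in $G$ as well, and they fail to be surjective on $G$ only because the components $G_m$ of $G-S$ not contained in $G'$ are unvisited. The idea is to insert one long excursion: freeze Bob at a vertex $b^\ast$ lying in some retained component $G_{i_\ell}$ (Bob reaches such a vertex, being surjective on $G'$), choose a gateway $s_0\in S$ with $s_0\notin N(b^\ast)$, and let Alice sweep every missing component. Alice can reach each missing $G_m$ from $s_0$, since every vertex of $S$ has a neighbour in $G_m$ by Lemma~\ref{l:minCutNeighboursEverywhere}; she traverses $G_m$ and returns to $s_0$. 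Throughout the excursion Alice sits either at $s_0$, which is at distance at least $2$ from $b^\ast$ by the choice $s_0\notin N(b^\ast)$, or inside a missing component, which is at distance at least $2$ from $b^\ast$ by fact~(b); so the safety distance $2$ is maintained while Bob waits. Concatenating the frozen padding with the original walks (using Remark~\ref{assumeAnyStart} to align the excursion with a convenient moment) yields surjective walks on $G$ of safety distance at least $2$.

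The main obstacle is making this excursion rigorous, namely guaranteeing a moment at which Bob can be parked in a retained component at a vertex $b^\ast$ with $S\setminus N(b^\ast)\ne\emptyset$, and ensuring Alice can travel from her current position to the gateway $s_0$ without ever entering $N[b^\ast]$. I expect to resolve this by exploiting the freedom in both the parking time (Bob traverses the whole retained component, so many candidates $b^\ast$ occur) and the gateway, together with the assumption $\svSpan{G'}\ge 2$, which forces enough separation that some vertex of $S$ avoids $N(b^\ast)$. The argument then splits according to whether Alice is in $S$ or inside a retained component at the parking moment: in the former case Alice already stands at a usable gateway, while in the latter a short safe path inside the retained components to a gateway completes the construction.
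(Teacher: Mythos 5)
Your overall architecture is the same as the paper's: extend a pair of optimal distance-$2$ walks on the union of lobes to walks on all of $G$ by inserting an excursion through a vertex of $S$ into the missing components (via Lemma~\ref{l:minCutNeighboursEverywhere}) while the other player is essentially frozen; your preliminary facts (the induced-subgraph equivalence $d_{G'}(x,y)\ge 2 \Leftrightarrow d_G(x,y)\ge 2$, distance $\ge 2$ across distinct components of $G-S$, and the DFS argument for $\svSpan{G'}\ge 1$) are all correct. But at the crux your quantifiers are in the wrong order, and the obstacle you flag is a genuine gap, not a technicality. You park Bob at some $b^\ast$ in a retained component and then look for a gateway $s_0\in S\setminus N(b^\ast)$; nothing prevents every candidate $b^\ast$ from being adjacent to all of $S$, and your claim that $\svSpan{G'}\ge 2$ ``forces'' such an $s_0$ is false as stated --- optimality only separates the players' \emph{simultaneous} positions, it says nothing about the distance from a parked Bob to the whole set $S$. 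The paper's resolution inverts your construction: since Alice's walk is surjective on $G'$ and $S\subseteq V(G')$, there is a time $t$ with $a_t\in S$, and then $d(a_t,b_t)\ge 2$ comes for free from optimality; Alice's own position is the gateway, the excursion is inserted at exactly that instant, and your second worry (a ``safe path'' for Alice to reach the gateway) evaporates because she is already there.

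Two further holes remain even granting the anchoring. First, your case analysis omits $b_t\in S$: Lemma~\ref{l:clique} (which would make $S$ a clique and such a $b_t$ impossible) requires $\Delta(G)<|V(G)|-1$, which is not assumed here, so Bob may sit in $S$ at distance $\ge 2$ from $a_t$. A Bob frozen in $S$ is useless, since by Lemma~\ref{l:minCutNeighboursEverywhere} he has neighbours in every missing component and Alice could pass at distance $1$. The paper handles this with a small dance: Bob steps to a neighbour $w$ of $b_t$ inside a retained component whenever Alice is away, and returns to $b_t$ precisely when Alice is at the gateway (since $w$ may be adjacent to $a_t$, but $d(a_t,b_t)\ge 2$ is guaranteed). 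Second, you only extend Alice's walk: after the excursion Bob's walk is still surjective merely on $G'$, not on $G$, so the pair does not yet witness $\svSpan{G}\ge 2$. You must swap roles and repeat the construction for Bob, as the paper does explicitly. All three repairs are local and your skeleton survives them, but as written the proposal does not close.
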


\begin{proof}
    Let $S$ be a minimal cut set of $G$. Let $G_1, G_2, \ldots, G_k$ be the components of $G-S$ and let the corresponding $S$-lobes be $H_1, \ldots, H_k$. Towards contradiction assume that there exists $i \in \{1, \ldots, k\}$ such that $\svSpan{H_i}>1$. 
    
    Let Alice and Bob both start their walks on $G$ in $H_i$ by visiting all vertices of $H_i$ while keeping distance at least 2 from each other (this is possible as $\svSpan{H_i}>1$). Let $W: u_0, \ldots, u_m$, $W':v_0, \ldots, v_m$ be optimal strong walks on $H_i$ for Alice and Bob, respectively. We will extend both these walks on walks through all the vertices of $G$. Since $S \subseteq V(H_i)$, there is exists $t$ such that $u_t \in S$. When Alice is in $u_t$, Bob is in $v_t$ and they are at distance at least 2. By Lemma \ref{l:minCutNeighboursEverywhere}, $u_t$ has a neighbour in every component of $G-S$. Thus, for all $j\not = i$ Alice can visit each component $G_j$ from $u_t$, traverse all the vertices of $G_j$ and backtrack to $u_t$. During this Bob uses one of the following strategies: if $v_t \not \in S$ (therefore $v_t \in V(G_i)$), then Bob remains in $v_t$ and the distance between him and Alice is at least 2 for each of the new Alice's steps. If $v_t \in S$, then by Lemma \ref{l:minCutNeighboursEverywhere} there is a vertex $w \in V(G_i)$ such that $v_t w \in E(G)$. For each of the new Alice's steps Bob moves to or stays at $w$, except when Alice is in $u_t$, then Bob moves to $v_t$. In this manner Alice can visit all the vertices of $G$ and keep the distance at least 2 from Bob. After that they swap strategies and Bob can visit all vertices of $G$ while keeping the distance to Alice at least 2. Hence, $\svSpan{G} \geq 2$, a contradiction.

    A similar line of thought can be used to prove the moreover part of the lemma.
\end{proof}

\begin{lemma}\label{l:clique}
    If $G$ is a connected graph with $\Delta(G) < |V(G)|-1$ and $\svSpan{G}=1$, then any minimal cut set $S$ of $G$ is a clique.
\end{lemma}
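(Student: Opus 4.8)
The plan is to argue by contradiction: assume $S$ is a minimal cut set of $G$ that is not a clique, and produce two surjective vertex walks that stay at distance at least $2$ at every time step. This forces $\svSpan{G}\ge 2$, contradicting the hypothesis $\svSpan{G}=1$. Fix $x,y\in S$ with $xy\notin E(G)$, so that $d_G(x,y)\ge 2$, and let $G_1,\dots,G_k$ (with $k\ge 2$) be the components of $G-S$. I will use two geometric facts throughout: any two vertices lying in distinct components $G_i,G_j$ are at distance at least $2$ (every path between them meets $S$, and an edge joining them would reconnect $G-S$); and, by Lemma~\ref{l:minCutNeighboursEverywhere}, each vertex of $S$ --- in particular each of $x,y$ --- has a neighbour in every component.

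The core idea is a \emph{simultaneous crossing} through the non-adjacent pair $x,y$. Suppose Alice sits at a vertex $a\in G_i$ adjacent to $x$ while Bob sits at a vertex $b\in G_j$ ($j\ne i$) adjacent to $y$; their distance is at least $2$. If both move at once, $a\to x$ and $b\to y$, the resulting configuration $(x,y)$ is safe precisely because $d_G(x,y)\ge 2$; a second simultaneous step $x\to x'$, $y\to y'$ into neighbours $x'\in G_j$, $y'\in G_i$ returns both players to distinct components. Thus the two players can swap components --- and have Alice visit $x$ while Bob visits $y$ --- without ever coming within distance $1$. Reversing the roles of $x$ and $y$ lets Alice visit $y$ and Bob visit $x$. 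Between crossings, while one player traverses an entire component $G_\ell$ (possible since $G_\ell$ is connected), the partner simply parks at a fixed vertex of a different component, so their distance stays at least $2$.

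Assembling these moves gives the walks. I would first fix, using Remark~\ref{assumeAnyStart}, convenient start and end vertices, then interleave component-traversals (one explorer, one parked in another component) with simultaneous crossings so that, over the whole schedule, each player visits every component as well as both $x$ and $y$. The remaining vertices to account for are those of $S\setminus\{x,y\}$: each such $s$ must be visited by both players, and when a player sits at $s$ the partner must be at distance at least $2$ from $s$. If $s$ is non-adjacent to $x$ (or to $y$), the partner can simply wait at that anchor; the genuinely delicate case is when $s$ is adjacent to both $x$ and $y$, and here the hypothesis $\Delta(G)<|V(G)|-1$ enters decisively: since $G$ has no universal vertex, $s$ has a non-neighbour, i.e.\ a haven at distance at least $2$ where the partner can be parked while $s$ is visited.

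I expect the main obstacle to be precisely this global scheduling: turning the local safe moves above into a single pair of walks that are simultaneously surjective, adjacent-or-stationary at every step, and at distance at least $2$ at every step --- in particular, timing each player's visits to the non-anchor cut vertices so that the partner can reach and occupy an appropriate haven without ever violating the distance bound. This is where both hypotheses are indispensable: non-adjacency of $x,y$ supplies the safe passing configuration $(x,y)$, and $\Delta(G)<|V(G)|-1$ supplies havens for the remaining cut vertices. That the degree hypothesis cannot be dropped is illustrated by the wheel $K_1\vee C_4$: there $\{u,v_1,v_3\}$, with $u$ the hub and $v_1,v_3$ antipodal, is a non-clique minimal cut set, yet the universal vertex $u$ (which both players must visit and which has no haven) forces $\svSpan{K_1\vee C_4}=1$.
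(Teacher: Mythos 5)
Your proposal takes essentially the same route as the paper's proof: argue by contradiction, use the non-adjacent pair $s_1,s_2\in S$ (your $x,y$) for a simultaneous crossing between components, park the partner in a distinct component during each component traversal, invoke $\Delta(G)<|V(G)|-1$ to obtain a non-neighbour ``haven'' $u$ with $d(s,u)\geq 2$ for each remaining $s\in S\setminus\{s_1,s_2\}$, and finally swap roles to conclude $\svSpan{G}\geq 2$. The scheduling issue you flag as the main obstacle is settled in the paper by exactly the local moves you describe, with one case split worth making explicit: if the haven $u$ lies in $S$, both players move simultaneously from neighbours in their respective components (Alice $s^i\to s$, Bob $u^j\to u$, then back), whereas if $u$ lies in a component $G_{j'}$, the players first swap so that Bob's component is $G_{j'}$, Bob moves to $u$, and Alice visits $s$ via $s^{i'}$ --- this also repairs your shortcut of parking the partner at the anchor $x$, which is unsafe since $x$ may be adjacent to the vertex $s^{i'}$ from which Alice approaches $s$.
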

\begin{proof}
    Let $S$ be a minimal cut set of $G$ and $G_1, \ldots, G_k$ the  components of $G-S$. Towards contradiction suppose that $S$ is not a clique, thus there exist two distinct vertices of $S$, say $s_1$ and $s_2$, such that $s_1 s_2 \not \in E(G)$. We will show that Alice can visit all the vertices of $G$ while keeping the distance at least 2 to Bob at all times. After that, Bob can use Alice's strategy to do the same. In what follows, if not stated otherwise, while one player is moving, the other remains in the same position. 

    Let Alice and Bob start in two distinct components of $G$, $G_i$ and $G_j$, respectively. Note first, Alice can visit all the vertices of $G_i$ while moving only on vertices of $G_i$ ($G_i$ is connected) and if Bob stays in $G_j$ (e.g. Bob does not move), the distance to Alice is always at least 2 during this. 
    
    Next, we will show that Alice and Bob can move from $G_i$ and $G_j$, respectively, to any other two distinct components, say $G_{i'}$ and $G_{j'}$, respectively, and still maintain distance at least 2. By Lemma \ref{l:minCutNeighboursEverywhere}, every $s\in S$ has a neighbour in every component of $G-S$. For each $s \in S$ and each $y \in \{1, \ldots, k\}$ denote by $s^y$ one such neighbour of $s$ in $G_y$. Alice and Bob can swap components in the following way. Alice moves within $G_i$ to $s_1^i$, and Bob moves within $G_j$ to $s_2^j$. When they are at their destinations, they both move at the same time, Alice to $s_1$ and Bob to $s_2$. Since  $s_1 s_2 \not \in E(G)$, the distance between them is still at least 2. After this, again at the same time, Alice moves to $s_1^{i'}$ and Bob to $s_2^{j'}$. Doing this they both end up in the desired components, Alice in $G_{i'}$ and Bob in $G_{j'}$. Note, by doing this, Alice also visits $s_1$. Moreover, if we swap the roles of $s_1$ and $s_2$, Alice can also visit $s_2$. Combining the described strategies, Alice can visit all the vertices in $G - S$ and both $s_1$ and $s_2$ and always be at distance at least 2 from Bob.

    What remains is to show that Alice can also visit any vertex of $S\setminus\{s_1, s_2\}$ and still be at distance at least 2 from Bob. W.l.o.g. we can assume that at this point in time Alice is in $G_i$ and Bob is in $G_j$, where $i\not= j$ (if not, then they can backtrack all their moves and end up in these starting components).
    Let $s \in S\setminus\{s_1, s_2\}$ be arbitrary. Since  $\Delta(G) < |V(G)| - 1$ there is a vertex $u \in V(G)$, such that $d(s, u) \geq 2$. If $u \in S$, then Alice moves to $s^i$ and Bob to $u^j$, after that at the same time Alice moves to $s$ and Bob to $u$, and then at the same time they return to their previous position. If $u \not \in S$, then $u$ is in some component of $G-S$, say $G_{j'}$. Then they swap components with accordance to the previously described strategy, such that Bob ends up in $G_{j'}$ and Alice in any other component, say $G_{i'}$. Next, Bob moves to $u$, Alice moves to $s^{i'}$ and then to $s$ and back. In both cases Alice was able to visit $s$ and be at distance at least 2 from Bob at all times. 

    Following the described strategies Alice can visit every vertex of $G$ and keep at distance at least 2 from Bob at all points in time. Now, they swap roles and Bob can use the same strategies to visit all vertices of $G$ and also keep at distance at least 2 from Alice. This implies that $\svSpan{G} > 1$, a contradiction.
\end{proof}



\begin{lemma}
    Let $G$ be a graph with $\svSpan{G}=1$, $\Delta(G) < |V(G)|-1$, $S$ a minimal cut set of $G$ and $G_1, \ldots, G_k$ the components of $G-S$. Then $G[S \cup V(G_i)] \cong G[S] \vee G_i$ holds for all except at most two indices $i \in \{1, \ldots, k\}$. 
\end{lemma}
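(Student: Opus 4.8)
The plan is to argue the contrapositive: assuming that at least three of the components $G_1,\dots,G_k$ fail to satisfy $G[S\cup V(G_i)]\cong G[S]\vee G_i$, I will exhibit a pair of surjective vertex walks keeping distance at least $2$ at all times, contradicting $\svSpan{G}=1$. First I would record the two facts that make the join condition easy to read off. By Lemma \ref{l:clique} the set $S$ is a clique, and by Lemma \ref{l:minCutNeighboursEverywhere} every vertex of $S$ has a neighbour in every component of $G-S$. Since $G[S\cup V(G_i)]$ already contains $G[S]$ and $G_i$ as induced subgraphs, a comparison of edge counts shows that $G[S\cup V(G_i)]\cong G[S]\vee G_i$ holds if and only if every vertex of $S$ is adjacent to every vertex of $G_i$. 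Accordingly I call $G_i$ \emph{deficient} when some $s\in S$ has a non-neighbour in $G_i$, and the goal becomes: if at least three components are deficient, then $\svSpan{G}\ge 2$.

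Two elementary distance facts will drive the construction. If $u$ and $w$ lie in different components of $G-S$ then $d_G(u,w)\ge 2$, because every $u,w$-path must pass through $S$; and if $s\in S$ has a non-neighbour $x$ in a component $G_m$, then $d_G(s,x)\ge 2$. Together these give a single \emph{parking} primitive: while one player sits at such a vertex $x\in V(G_m)$, the other player may move freely within $\{s\}\cup V(G_j)$ for any single component $G_j$ with $j\ne m$ without the distance ever dropping below $2$. I would then assemble Alice's tour of $G$ out of three safe operations, with Bob parked appropriately in each: (a) \emph{roaming} a single component $G_j$ while Bob sits anywhere in another component; (b) \emph{poking} a cut vertex $s\in S$ via $s^{j},s,s^{j}$, where $s^{j}$ is a neighbour of $s$ in $G_j$ (Lemma \ref{l:minCutNeighboursEverywhere}) and Bob parks at a non-neighbour of $s$ in some component $G_m\ne G_j$, which exists because $s$ is not universal ($\Delta(G)<|V(G)|-1$) and $S$ is a clique; and (c) \emph{transiting} from $G_j$ to $G_{j'}$ along $s^{j},s,s^{j'}$ while Bob parks at a non-neighbour of $s$ in a component $G_m\notin\{G_j,G_{j'}\}$.

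The only operation that needs a third deficient component is the transit (c): to cross from $G_j$ to $G_{j'}$ I must route Alice through a cut vertex $s$ that owns a non-neighbour outside $G_j\cup G_{j'}$. This is where the hypothesis enters: among at least three deficient components at least one, say $G_m$, avoids the two indices $j,j'$, and the witnessing non-edge $sx\notin E(G)$ with $x\in V(G_m)$ supplies exactly such an $s$ together with a legal parking vertex for Bob. Since every component can be roamed, every cut vertex poked, and every pair of components connected by a transit, Alice can visit all of $V(G)$ while Bob stays at distance at least $2$. Running the mirror image (Bob tours, Alice parks) and concatenating the two phases, using Remark \ref{assumeAnyStart} to align the start and end positions, yields two surjective vertex walks at distance at least $2$ throughout, so $\svSpan{G}\ge 2$, the desired contradiction; hence at most two components are deficient.

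I expect the main obstacle to be bookkeeping the two walks simultaneously rather than any single inequality. Each of Alice's operations requires Bob to be parked at a prescribed vertex, so between consecutive operations Bob himself must relocate from one component to another, and such a relocation is again a transit through $S$ that is only safe while Alice is parked at a suitable non-neighbour. The delicate point is therefore to schedule the players so that at every single time step, and not merely at the critical configurations, one player is deep inside a component while the other performs a maneuver. The three-deficient-components hypothesis is precisely what guarantees that a valid parking vertex is always available for whichever player is momentarily stationary, and the symmetry of the parking primitive is what lets the two phases be glued into a single pair of optimal strong vertex walks.
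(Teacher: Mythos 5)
Your proposal is correct and follows essentially the same route as the paper's proof: assume three deficient components, extract one non-edge witness $g_is_i$ per component, use these witnesses as parking spots while the other player transits through $S$ via the neighbours guaranteed by Lemma \ref{l:minCutNeighboursEverywhere}, cover the remaining vertices of $S$ using $\Delta(G)<|V(G)|-1$ together with Lemma \ref{l:clique} (which forces the non-neighbour $u$ of $s$ to lie outside $S$), and finally swap roles. The only real difference is presentational: the scheduling you flag as the main obstacle is exactly what the paper writes out concretely --- e.g.\ when Alice must enter Bob's component $G_2$, Bob first vacates via $s_1$ into $G_3$ while Alice parks at $g_1$, after which Alice crosses via $s_3$ while Bob parks at $g_3$ --- which is precisely your transit primitive instantiated with the three witnesses.
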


\begin{proof}
    Towards contradiction suppose that there are at least three components of $G-S$, say $G_1, G_2,$ and $G_3$, such that $G[S \cup V(G_i)] \ncong G[S] \vee G_i$ for all $i\in\{1,2,3\}$. Therefore there exist $g_1 \in V(G_1), g_2 \in V(G_2), g_3 \in V(G_3),$ and $s_1, s_2, s_3 \in S$ such that none of $g_1s_1, g_2s_2, g_3s_3$ are edges of $G$. Similarly to the proof of Lemma \ref{l:clique} we will show that Alice can visit all the vertices of $G$ while keeping the distance at least 2 to Bob at all times. After that, Bob can use Alice's strategy to do the same. Again, if not stated otherwise, while one player is moving, the other remains in the same position. 

    Note first, whenever Alice and Bob are in distinct components of $G-S$, then Alice can visit all the vertices of her current component while moving only on vertices of that component and if Bob does not move, the distance to Alice is always at least 2 during this. 

    Let Alice and Bob start their walks in $g_1$ and $g_2$, respectively. First, we will describe, how Alice can move from $G_1$ to any other component $G_i$, where $i>2$ and maintain distance at least 2 from Bob. Since Alice can always backtrack her steps, we may assume she always does this from $G_1$. By Lemma \ref{l:minCutNeighboursEverywhere}, every $s\in S$ has a neighbour in every component of $G-S$. For each $s \in S$ and each $y \in \{1, \ldots, k\}$ denote by $s^y$ one such neighbour of $s$ in $G_y$. Alice moves to $s_2^1$, Bob moves to $g_2$, then Alice moves to $s_2$ (note, $g_2 s_2 \not \in E(G)$) and then to $s_2^i$ (thus she reaches the component $G_i$). 
    
    Second, if Alice wants to move to $G_2$ (Bob's component), she can do this and maintain distance at least 2 from Bob by doing as follows. Alice moves to $g_1$, Bob moves to $s_1^2$, then Bob moves to $s_1$ (note, $g_1 s_1 \not \in E(G)$) and then to $s_1^3$. Now, Alice is in $G_1$ and Bob in  $G_3$. Next Alice moves to $s_3^1$ and Bob to $g_3$, then Alice to $s_3$ (note again, $g_3 s_3 \not \in E(G)$), and finally she moves to $s_3^2$ and reaches $G_2$. Clearly, the distance between Alice and Bob during these swaps was always at least 2. Note also, Alice visited $s_2$ and $s_3$ during this. A similar strategy can be used for Alice to visit $s_1$, as well and still be at distance at least 2 from Bob. So Alice can use the described strategies to visit all the vertices of $G_i$, for all $i\in \{1, 2, \ldots, k\}$ and $s_1, s_2, s_3$ and always be at distance at least 2 from Bob.

    Finally, let $s\in S\setminus\{s_1, s_2, s_3\}$ be arbitrary. We must show that Alice can visit $s$ and still maintain the distance at least 2 from Bob at all points in time. Since $\Delta(G) < |V(G)| - 1$ there is a vertex $u \in V(G)$, such that $d(s, u) \geq 2$. Moreover, by Lemma \ref{l:clique} it follows that $u \not \in S$, i.e. $u \in G_{j'}$, for some ${j'} \in \{1, 2, \ldots, k\}$. Alice and Bob can use previous strategies, so that Bob ends up in $G_{j'}$ and Alice in some component $G_{i'}$, where ${i'}\not = {j'}$, and maintain the distance at least 2. Now Alice moves to $s^{i'}$, Bob moves to $u$, then Alice moves to $s$ and back. Since $su \not \in E(G)$ Alice can visit $s$ while maintaining the distance at least 2 from Bob.

    It follows that Alice can visit every vertex of $G$ and keep at distance at least 2 from Bob at all points in time. Now, they swap roles and Bob can use the same strategies to visit all vertices of $G$ and also keep at distance at least 2 from Alice. This implies that $\svSpan{G} > 1$, a contradiction.
\end{proof}

In the second part of this section we show some sufficient properties for a graph to have the strong vertex span 1. An \emph{interval representation} of a graph is a family of intervals of the real line assigned to vertices so that vertices are adjacent if and only if the corresponding intervals intersect. A graph is an \emph{interval graph} if it has an interval representation. One may assume that all intervals in any interval representation of an interval graph are closed, have positive length, and no intervals agree on any endpoint. 

Let $G$ be an interval graph. Given an interval representation of $G$, for all $v\in V(G)$ let $I_v=[l_v, r_v]$ be the interval corresponding to $v$. A vertex $v$ of $G$ is called an \emph{end-vertex} if there exists an interval representation of $G$ such that $l_v = \min\{ l_u \mid u\in V(G)\}$ or $r_v = \max\{ r_u \mid u\in V(G)\}$, the corresponding interval $I_v$ is called an \emph{end-interval} \cite{Gim88}. A maximal clique of  $G$ is an \emph{end-clique} if it contains an end vertex of $G$. It is easy to see that an end-vertex $v$ of $G$ is always simplicial, i.e. $N[v]$ is a clique. 

An independent set of three vertices such that each pair is joined by a path that avoids the neighbourhood of the third is called an \emph{asteroidal triple}. Interval graphs can be characterised as asteroidal triple-free chordal graphs \cite{bls-99}.

\begin{theorem}\label{th:interval}
    If $G$ is a non-trivial connected interval graph, then $\svSpan{G}=1$.
\end{theorem}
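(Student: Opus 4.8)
The plan is to establish the two inequalities $\svSpan{G}\ge 1$ and $\svSpan{G}\le 1$ separately, which together give the claimed equality.

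For the lower bound $\svSpan{G}\ge 1$ I would not even use the interval structure, since it holds for every non-trivial connected graph. First fix a spanning tree $T$ of $G$ and let $W\colon w_0,w_1,\dots,w_L$ (with $L=2(|V(G)|-1)$ and $w_L=w_0$) be the closed DFS walk traversing each edge of $T$ once in each direction; $W$ is a surjective vertex walk in which consecutive vertices are always distinct and adjacent. Let Alice follow $W$ and let Bob follow the same walk shifted one step ahead, i.e. Bob sits at $w_{t+1}$ (indices cyclic modulo $L$) when Alice sits at $w_t$. Both walks are surjective and of equal length, and at every time $t$ the vertices $w_t$ and $w_{t+1}$ are adjacent in $T\subseteq G$, so $d_G(w_t,w_{t+1})=1$. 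Hence $\svSpan{G}\ge 1$.

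For the upper bound I would show that Alice and Bob cannot both traverse all of $G$ while keeping $d_G\ge 2$ at all times. Fix an interval representation with all intervals closed, of positive length, and with pairwise distinct endpoints, writing $I_v=[l_v,r_v]$. The only property used is that $d_G(u,v)\ge 2$ is equivalent to $I_u\cap I_v=\emptyset$, i.e. $r_u<l_v$ or $r_v<l_u$. Suppose toward a contradiction that there were optimal strong vertex walks $a_0,\dots,a_m$ (Alice) and $b_0,\dots,b_m$ (Bob) with $d_G(a_t,b_t)\ge 2$ for all $t$. Then at each time the intervals $I_{a_t}$ and $I_{b_t}$ are disjoint, so exactly one of them lies entirely to the left of the other; call Alice \emph{left} at time $t$ if $r_{a_t}<l_{b_t}$, and \emph{right} otherwise.

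The heart of the argument, and the step I expect to be the main obstacle, is a no-crossing lemma: the side on which Alice sits cannot change from one step to the next. Indeed, since $a_t$ and $a_{t+1}$ are equal or adjacent, $I_{a_t}\cap I_{a_{t+1}}\neq\emptyset$, whence $l_{a_{t+1}}\le r_{a_t}$, and likewise $l_{b_t}\le r_{b_{t+1}}$ for Bob. If Alice were left at time $t$ but right at time $t+1$, these two overlap inequalities together with $r_{a_t}<l_{b_t}$ and $r_{b_{t+1}}<l_{a_{t+1}}$ would chain into $l_{a_{t+1}}\le r_{a_t}<l_{b_t}\le r_{b_{t+1}}<l_{a_{t+1}}$, a contradiction; the symmetric case is identical. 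Hence one player, say Alice after relabelling, is left at every time. But then Alice can never occupy the vertex $v^\ast$ whose interval has the globally largest right endpoint: if $a_t=v^\ast$ then being left forces $l_{b_t}>r_{v^\ast}\ge r_{b_t}>l_{b_t}$, which is impossible. This contradicts surjectivity of Alice's walk, so no walks keeping distance $\ge 2$ exist, giving $\svSpan{G}\le 1$. The only subtlety I would double-check is that all endpoint comparisons can be taken strict exactly where needed (using positive length and distinct endpoints) and that the reduction ``distance $\ge 2$ $\iff$ disjoint intervals'' is precisely the adjacency characterisation of the representation; both are immediate from the normalisation recalled before the theorem, and everything else is bookkeeping on the linear order of endpoints.
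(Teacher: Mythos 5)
Your proposal is correct and is essentially the paper's argument: both fix an interval representation and hinge on the same overlap-inequality chain $l_{a_{t+1}}\le r_{a_t}<l_{b_t}\le r_{b_{t+1}}$, which the paper packages as a first-crossing-time argument on left endpoints (after using Remark~\ref{assumeAnyStart} to start Alice at the leftmost interval), and you package contrapositively as a side-invariance lemma plus the observation that the perpetually-left player can never reach the vertex of maximal right endpoint, contradicting surjectivity. Your explicit DFS-tour construction for the lower bound $\svSpan{G}\ge 1$ is also sound, where the paper simply asserts this from non-triviality of $G$.
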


\begin{proof}
    Choose an arbitrary interval representation of $G$ and for all $v\in V(G)$ let $I_v=[l_v, r_v]$ be the interval corresponding to $v$.
    Order the vertices of $G$ with respect to the left end-points in their interval representation, say $v_1, v_2, \ldots, v_n$ where $l_{v_1} \leq l_{v_2} \leq \ldots \leq l_{v_n}$.

    By Remark \ref{assumeAnyStart} assume Alice starts her optimal strong vertex walk in $a_1=v_1$. Since graph is non-trivial, it follows that $\svSpan{G}\geq 1$. Let Bob be at any other vertex of $G$, say $b_1 \not= a_1$, in his corresponding strong vertex optimal walk. If $d(a_1, b_1)=1$ then $\svSpan{G}\leq 1$ and this part is done. 
    
    Assume now, $d(a_1, b_1)>1$. Therefore $r_{a_1} < l_{b_1}$. Since Alice visits all vertices of $G$, at some point in time, say $t+1$, it must hold that $l_{b_{t+1}} \leq l_{a_{t+1}}$, and for all $t' < t+1$ it holds true that  $l_{b_{t'}} > l_{a_{t'}}$. If $l_{b_t} \leq r_{a_t}$, then $d(a_t, b_t)=1$ and $\svSpan{G}\leq 1$, again we are done. Otherwise, since $l_{a_t} < l_{b_t}$ and  $l_{b_{t+1}} \leq l_{a_{t+1}}$, $a_{t+1} \in N[a_t]$ and $b_{t+1} \in N[b_t]$ it follows that $I_{a_{t+1}} \cap I_{b_{t+1}} \not= \emptyset$. Which means that $d(a_{t+1}, b_{t+1})=1$, again the assertion follows and the proof is concluded.
\end{proof}

If $T$ is a non-trivial tree, then $\svSpan{T}=1$ is also a sufficient condition for $T$ to be an interval graph. A \emph{subdivided star} $S_{1,n}$ is a tree obtained from the star $K_{1,n}$ by subdividing each edge of $K_{1,n}$ exactly once. A tree $T$ is an interval graph if and only if it does not contain $S_{1,3}$ as an induced subgraph~\cite{mcmc-99}. For more details on interval graphs see \cite{bls-99, mcmc-99}.  

\begin{theorem}
    Let $T$ be a non-trivial tree. Then $T$ is an interval graph if and only if $\svSpan{T}=1$.
\end{theorem}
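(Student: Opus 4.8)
The plan is to prove the two implications separately, using for the harder direction the characterisation cited above (\cite{mcmc-99}) that a tree is an interval graph if and only if it contains no induced $S_{1,3}$. The forward direction is immediate: a non-trivial tree is a non-trivial connected graph, so if $T$ is an interval graph, then $\svSpan{T}=1$ by Theorem \ref{th:interval}. For the converse I would argue by contraposition and show that if $T$ is not an interval graph, then $\svSpan{T}\geq 2$; together with the forward direction this yields the equivalence, since $\svSpan{T}\geq 2$ forces $\svSpan{T}\neq 1$.

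So suppose $T$ is not an interval graph. By the characterisation it contains an induced $S_{1,3}$, which in a tree amounts to a vertex $c$ with three distinct neighbours $x_1,x_2,x_3$, each having a further neighbour $y_i\neq c$. Equivalently, rooting $T$ at $c$, the vertex $c$ has at least three components of $T-c$ that contain a vertex at distance $2$ from $c$; I will call these the \emph{deep legs} and call the remaining components (single leaves adjacent to $c$) the \emph{shallow legs}. The goal is then to exhibit two surjective vertex walks that stay at distance at least $2$ at all times, which proves $\svSpan{T}\geq 2$.

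I would build the walks in two phases, mirroring the role-swapping argument used in the proofs of Lemmas \ref{l:SlobeSpan=1} and \ref{l:clique}. In the first phase Alice (the \emph{traverser}) performs a depth-first closed tour of $T$ starting and ending at $c$, so that each leg is explored in one contiguous excursion and every vertex is visited. Bob (the \emph{maintainer}) need not visit anything; he only has to keep distance at least $2$ from Alice, which he does by parking at a vertex at distance exactly $2$ from $c$ inside a deep leg other than the one Alice currently occupies. Ordering the excursions so that the deep legs come first, Bob can remain in a single deep leg for essentially the whole tour, relocating to another deep leg just once. The relevant distances are routine: if Bob sits at distance $2$ from $c$ in leg $j$ while Alice is at $c$ or anywhere in a leg $i\neq j$, their distance is at least $2$. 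In the second phase the roles are swapped and Bob performs the tour while Alice parks; a short transition walk (moving one player between a deep-leg vertex and $c$ while the other waits) joins the phases without the distance ever dropping below $2$. Concatenating the two phases gives walks that are each surjective—Alice in phase one, Bob in phase two—and stay at distance at least $2$ throughout.

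The main obstacle will be timing Bob's relocation between deep legs: to move from one deep leg to another he must pass through $c$, and while he sits at $c$ (or at a neighbour of $c$) Alice must still be at distance at least $2$. I would schedule this relocation during Alice's excursion into a deep leg $i$ distinct from both Bob's source and target legs; because that leg is deep, there is a moment at which Alice is at distance at least $2$ from $c$, and placing Bob at $c$ precisely then (and at the intermediate $x$-vertices at the adjacent steps, when Alice is merely inside leg $i$) keeps the distance at least $2$ across the whole four-step relocation. Having at least three deep legs is exactly what guarantees a safe parking leg at every instant together with a deep excursion during which the single relocation can be carried out. Checking that such an excursion is long enough to accommodate the relocation—already true for legs of depth $2$—and that shallow-leg excursions never force a relocation will complete the argument.
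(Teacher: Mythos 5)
Your proposal is correct and takes essentially the same route as the paper: the forward direction via Theorem~\ref{th:interval}, and for the converse the characterisation of interval trees as $S_{1,3}$-free, followed by a two-player strategy in which one player tours the tree while the other parks at a vertex at distance $2$ from the branch vertex inside a different deep branch, with the three deep branches guaranteeing a safe handoff and a role swap to make both walks surjective. The paper merely organises the schedule differently (a four-phase split of $T$ into $T_{a_1}$ and $T-T_{a_1}$, using the leaves $b_1,b_2,b_3$ as parking spots, instead of your DFS tour with a single relocation), which is a bookkeeping difference rather than a different argument.
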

\begin{proof}
    If $T$ is an interval graph, then $\svSpan{T}=1$ by Theorem~\ref{th:interval}. For the converse let $\svSpan{T}=1$. For the purpose of contradiction assume that $T$ is not an interval graph. Thus $T$ contains $S_{1,3}$ as an induced subgraph. Denote vertices of an arbitrary $S_{1,3}$ in $T$ by $x$ (the vertex of degree 3 in $S_{1,3}$), $a_1,a_2,a_3$ (the vertices of degree 2 in $S_{1,3}$) and $b_1,b_2,b_3$ (the remaining three vertices) and choose the notation such that $a_i b_i \in E(T)$ for any $i\in \{1, 2, 3\}$. Also, for any $i \in \{1, 2, 3\}$ denote the component of $T-x$ that contains $a_i$ by $T_{a_i}$. Let Alice and Bob start their walks on $T$ in $b_1$ and $b_2$, respectively. Then Bob can visit all vertices of $T-T_{a_1}$ and return to $b_2$, while Alice stays in $b_1$. At this point, the distance between Alice and Bob is at least 2. After that Bob stays in $b_2$, while Alice visits all vertices of $T_{a_1}$, returns to $b_1$ and then moves to $b_3$ using the shortest $b_1,b_3$-path. Note that Alice and Bob still keep the distance at least 2 at all points in time. Finally, Bob moves from $b_2$ to $a_1$, visits all vertices of $T_{a_1}$ and returns to $b_1$, while Alice stays in $b_3$. After that, Bob stays in $b_1$, while Alice visits all the vertices of $T-T_{a_1}$. Since the distance between Alice and Bob is at least 2 at each point in time during their walks, we get a contradiction with $\svSpan{T}=1$.
\end{proof}

The result from trees unfortunately cannot be generalised to arbitrary graphs. There exist graphs $G$ with large induced cycles having $\svSpan{G}=1$ and also graphs $G$ containing asteroidal triples having $\svSpan{G}=1$. Clearly if $G$ is obtained from an arbitrary graph $G'$ by adding a new vertex $x$ and connecting $x$ to all vertices of $G'$ with an edge, then $G$ is a graph with $\svSpan{G}=1$. Thus any non-trivial graph $G$ that contains a universal vertex (a vertex of degree $|V(G)|-1$), has $\rad(G)=1$ and therefore $\svSpan{G}=1$. But there also exist non-interval graphs $G$ (either non-chordal or such with asteroidal triples) with $\Delta(G) < |V(G)|-1$ and $\svSpan{G}=1$ (see Figure~\ref{fig:non-chordal}).

\begin{figure}[ht!]
	\begin{center}
		\begin{tikzpicture}[style=thick,x=1cm,y=1cm]
    		\def\vr{2.5pt} 
    		\draw (0,0)--(0,3);
    
            \foreach \x in {0,1,2,3}{
                    \draw (0,\x)--(1,1.5);
            }
     
    		\draw (1,1.5)--(2,1);
            \draw (1,1.5)--(2,2);
    		\draw (2,1)--(2,2);
    		\draw (0,0) to [bend left=30] (0,3);
            \draw (3,1.5)--(2,2);
            \draw (3,1.5)--(2,1);
      
    		\draw (0,0) [fill=black] circle (\vr);
    		\draw (0,1) [fill=black] circle (\vr);
    		\draw (0,2) [fill=black] circle (\vr);
    		\draw (0,3) [fill=black] circle (\vr);
    		\draw (1,1.5) [fill=black] circle (\vr);
            \draw (2,1) [fill=black] circle (\vr);
            \draw (2,2) [fill=black] circle (\vr);
            \draw (3,1.5) [fill=black] circle (\vr);
		\end{tikzpicture}
	\end{center}
	\caption{Non-chordal graph $G$ with $\svSpan{G}=1$}
	\label{fig:non-chordal}
\end{figure}

In the next two theorems we provide a construction of an infinite family of graphs, say ${\mathcal{F}}$, such that it contains non-trivial interval graphs and any graph $G \in {\mathcal{F}}$ has $\svSpan{G}=1$.

\begin{definition}\label{def:aug}
Let $G$ and $H$ be connected vertex disjoint graphs and $S\subseteq V(G)$. The augmentation of $G$ with respect to $S$ and $H$ is the graph $\aug(G,S,H)$ with the vertex set $V(G) \cup V(H)$, and the edge set 
\[
    E(G) \cup E(H) \cup \{ sh \mid s \in S \land h \in V(H) \}.
\]
\end{definition}

\begin{theorem}\label{thm:naendkliko}
    Let $G$ be an interval graph, $K$ its end-clique and $H$ an arbitrary graph disjoint with $G$. If $G'=\aug(G,K,H)$, then $\svSpan{G'}=1.$
\end{theorem}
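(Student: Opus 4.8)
The plan is to prove the two inequalities $\svSpan{G'}\ge 1$ and $\svSpan{G'}\le 1$ separately. The lower bound is immediate, since $G'$ is connected and non-trivial. For the upper bound I would argue by contradiction, assuming Alice and Bob have surjective vertex walks on $G'$ keeping distance at least $2$ at all times, and then exhibit a moment where their distance is $1$. First I dispose of the degenerate case $V(G)=K$ (i.e.\ $G$ complete): then $G'=G[K]\vee H$, every vertex of $K$ is universal, so $\rad(G')=1$ and we are done. Hence assume $L:=V(G)\setminus K\ne\emptyset$. Since the end-vertex $v\in K$ is simplicial, $K=N_G[v]$; choosing (after possibly reflecting) an interval representation of $G$ with $l_v=\min_u l_u$, the clique $K=\{u:l_u\le r_v\}$ occupies the left end, while every vertex of $L$ has left endpoint $>r_v$. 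I assign to each $u\in V(G)$ the coordinate $\ell(u)=l_u$ and to each $h\in V(H)$ a coordinate $\ell(h)$ smaller than all of those, so that along the axis the blocks appear in the order $H<K<L$.

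Two facts drive the argument: (C1) whenever a player is in $K$, the other is in $L$ (because $k\in K$ satisfies $N_{G'}[k]\supseteq K\cup V(H)$, so the partner, at distance $\ge 2$, must lie in $L$); and (C2) $K$ separates $L$ from $V(H)$ in $G'$, since there are no $L$--$H$ edges. Using Remark~\ref{assumeAnyStart}, I assume Alice starts at the vertex $v_n\in L$ of maximum coordinate. If $d(a_1,b_1)=1$ we are done; otherwise $\ell(b_1)<\ell(a_1)$, and since Bob must also visit $v_n$, there is a first ``crossing'' time $t+1$ with $\ell(a_{t+1})<\ell(b_{t+1})$ while $\ell(a_{t'})>\ell(b_{t'})$ for all $t'\le t$. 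The key step is to show $a_t\in L$: indeed $a_t\notin K$ by (C1) (that would force $\ell(b_t)>\ell(a_t)$), and $a_t\notin V(H)$ because Alice, travelling from $v_n\in L$ to $V(H)$, must by (C2) occupy some $K$-vertex at a time $s\le t$, whereupon (C1) gives $\ell(b_s)>\ell(a_s)$, contradicting $\ell(a_s)>\ell(b_s)$.

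With $a_t\in L$ established, $a_{t+1}\in N_{G'}[a_t]\subseteq K\cup L$ and the crossing inequality together force $a_{t+1},b_{t+1}\in V(G)$. If $b_t\in V(G)$ as well, all four relevant vertices lie in $G$, so I can run the interval computation of Theorem~\ref{th:interval} verbatim (mirrored): either $I_{a_t}\cap I_{b_t}\ne\emptyset$, or the chain $l_{b_{t+1}}\le r_{b_t}<l_{a_t}\le r_{a_{t+1}}$ together with $l_{a_{t+1}}\le l_{b_{t+1}}$ yields $I_{a_{t+1}}\cap I_{b_{t+1}}\ne\emptyset$; either way some distance equals $1$. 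If instead $b_t\in V(H)$, then $b_{t+1}\in N_{G'}[b_t]\cap V(G)=K$, and the crossing inequality forces $a_{t+1}\in K$ too, so both players sit in the clique $K$, again at distance $1$. Every case contradicts the standing assumption, proving $\svSpan{G'}\le 1$.

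The main obstacle, and the reason the statement is not an immediate corollary of Theorem~\ref{th:interval}, is that $H$ is arbitrary, so $G'$ need not be an interval graph and $V(H)$ carries no usable interval coordinates. My resolution is to give $V(H)$ dummy coordinates on the far left and to replace all interval reasoning inside $H$ by the separation facts (C1)--(C2): the crossing can never ``occur inside $H$'', because Alice cannot reach $H$ from $v_n$ without passing through $K$, which by (C1) would put Bob ahead of her prematurely. This is exactly where the hypothesis that $K$ is an \emph{end}-clique is used --- it guarantees the clean block order $H<K<L$ --- and it is the step I would verify most carefully.
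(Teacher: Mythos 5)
Your proposal is correct, but it follows a genuinely different route from the paper's proof. The paper also fixes the end-simplicial vertex $v$ with $N_G[v]=K$ and argues by contradiction from a pair of optimal walks at distance at least $2$, but instead of re-running the interval sweep on $G'$ it performs surgery on the walks: anchoring Alice at $v$ via Remark~\ref{assumeAnyStart}, it observes that during any maximal excursion of a player into $V(H)$ --- which must enter and leave through $K$ --- the other player is confined to $V(G)\setminus\bigl(K\cup V(H)\bigr)$ (your facts (C1) and (C2) in disguise), hence stays at distance at least $2$ from $v$; so each such excursion can be replaced by idling at $v$. Deleting all $H$-vertices from both walks in this way yields optimal walks through all of $V(G)$ at distance at least $2$, i.e.\ $\svSpan{G}\ge 2$, contradicting Theorem~\ref{th:interval} applied as a black box. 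You instead avoid any walk modification and extend the sweep argument of Theorem~\ref{th:interval} directly to $G'$, giving $V(H)$ dummy coordinates left of $K$ and using (C1)--(C2) to show the first crossing cannot occur ``inside $H$'', with the residual case ($b_t\in V(H)$) collapsing both players into the clique $K$; I checked the crossing case analysis ($a_t\in L$, then $b_t\in V(G)$ versus $b_t\in V(H)$) and the inequality chain $l_{b_{t+1}}\le r_{b_t}<l_{a_t}\le r_{a_{t+1}}$, and all steps are sound, including the identification $K=N_G[v]=\{u \mid l_u\le r_v\}$ that yields your block order $H<K<L$ (strict comparisons are justified by the paper's convention that no two intervals agree on an endpoint, provided you also choose distinct dummy coordinates for $V(H)$). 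The trade-off: the paper's reduction is shorter because Theorem~\ref{th:interval} absorbs all interval reasoning, whereas your direct argument is more self-contained and makes transparent exactly where the \emph{end}-clique hypothesis is used, namely that $K$ sits at an extreme of the interval order so that $H$ can be attached beyond it without disturbing the sweep; both proofs ultimately rest on the same two structural facts, that every vertex of $K$ dominates $K\cup V(H)$ and that $K$ separates $V(H)$ from $V(G)\setminus K$.
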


\begin{proof}
    If $G$ is a complete graph, then $G'$ contains a universal vertex, thus $\rad(G')=1$, therefore $\svSpan{G'}=1$.
    Suppose now, $V(G)$ is not a clique, therefore $K\subset V(G)$. Let $x\in K$ be an end-simplicial vertex of $G$, i.e. $N_G[x] = K$. Towards contradiction suppose that $\svSpan{G'}\geq 2$. Let $W_A : a_1, \ldots, a_\ell$ and $W_B : b_1, \ldots, b_\ell$ be an optimal pair of strong vertex walks for Alice and Bob, respectively. Moreover, assume by Remark \ref{assumeAnyStart}, that $a_1 = a_\ell = x$. Since $W_A, W_B$ are optimal and $\svSpan{G'}\geq 2$, for any $i\in \{1, \ldots, \ell\}$ it holds true that $d_{G'}(a_i, b_i)\geq 2$. 

    We will transform these walks on $G'$ into walks through all vertices of $G$ while maintaining the distance at least 2 between Alice and Bob at all times. 
    
    First for Alice, we do the following procedure for all possible indices $i$ and $j$, where $i < j-1$, $a_i, a_j \in K$ and $a_k \in V(H)$ for any $k\in \{i+1, \ldots, j-1\}$. Note, for any $k \in \{i, i+1, \ldots, j\}$, $b_k \not \in V(H) \cup K$, hence $d_{G}(b_k, x) \geq 2$. Let $(a'_1, a'_2, \ldots, a'_i, a'_{i+1}, \ldots, a'_{j-1}, a'_j, a'_{j+1}, \ldots, a'_\ell)$, where $a'_k = x$ for all $k\in \{i+1, \ldots, j-1\}$ and $a'_k = a_k$ for all $k\in \{1, \ldots, \ell\} \setminus \{i+1, \ldots, j-1\}$. Therefore $d_{G}(a'_k, b_k)\geq 2$ for any $k\in\{1,\dots, \ell\}$. Denote by $W'_A$ the walk obtained by this procedure after there are no more vertices of $V(H)$ in the walk. In the same manner we can remove all vertices of $W_B$ that are in $V(H)$, denote by $W'_B$ the obtained walk. This means that $W'_A$ and $W'_B$ are walks through all the vertices of $G$ such that at any time the distance between Alice and Bob is at least 2, therefore $\svSpan{G}\geq 2$, a contradiction to Theorem \ref{th:interval}.
\end{proof}

\begin{theorem}\label{thm:naMinSep}
    Let $G$ be an interval graph, $K$ its minimal cut set and $H$ an arbitrary graph disjoint with $G$. If $G'=\aug(G,K,H)$, then $\svSpan{G'}=1.$
\end{theorem}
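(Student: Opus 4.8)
The plan is to argue by contradiction and reduce to Theorem~\ref{th:interval}. Suppose $\svSpan{G'}\ge 2$, and let $W_A\colon a_1,\dots,a_\ell$ and $W_B\colon b_1,\dots,b_\ell$ be an optimal pair of strong vertex walks on $G'$, so that $d_{G'}(a_t,b_t)\ge 2$ for every $t$; by Remark~\ref{assumeAnyStart} I may assume both walks start and end in $V(G)$. I would transform this pair into a pair of surjective vertex walks on $G$ that still keep distance at least $2$, which contradicts $\svSpan{G}=1$. Exactly as in the proof of Theorem~\ref{thm:naendkliko}, the whole task is to excise the visits to $V(H)$: since the only $G$-neighbours of $V(H)$ lie in $K$, every maximal run of one player inside $H$ is flanked by two vertices $a_i,a_j\in K$, its entry and exit.

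Two structural observations would drive the surgery. First, the two players are \emph{never simultaneously in} $V(H)$: at the earliest such time, one of them has just stepped in from a vertex of $K$, but every vertex of $H$ is completely joined to $K$ and $K$ is a clique (minimal cut sets of the chordal graph $G$ are cliques), so the previous step already forces distance at most $1$, a contradiction. Second, throughout one player's $H$-excursion the other is \emph{confined to a single component} of $G-K$: while, say, Alice is inside $H$, Bob is adjacent to no vertex of $K$ (again distance $1$), hence Bob cannot enter $K$, and as the components of $G-K$ communicate only through $K$ he stays inside one fixed component $G_l$ (here I again use that $K$ is a clique to exclude $b_t\in K$ at the two flanking times). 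Because $K$ is a \emph{minimal} cut set, $G-K$ has at least two components, so I may choose a component $G_{l'}\ne G_l$, and by Lemma~\ref{l:minCutNeighboursEverywhere} the entry and exit vertices $a_i,a_j$ each have a neighbour $a_i^{l'},a_j^{l'}$ in $G_{l'}$.

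The replacement then reroutes the excursioning player through $G_{l'}$ instead of through $H$: Alice walks $a_i\to a_i^{l'}\to\cdots\to a_j^{l'}\to a_j$ inside $G_{l'}$ while Bob replays his original subwalk inside $G_l$. Any vertex of $G_{l'}$ is at distance at least $2$ from any vertex of $G_l$, since a connecting path must cross $K$ and neither endpoint lies in $K$, so the distance stays at least $2$ automatically; the two seam times reuse the original positions and are fine. The one thing to reconcile is timing: a short excursion may not leave Alice enough steps to cross the connected graph $G_{l'}$ from $a_i^{l'}$ to $a_j^{l'}$. I would resolve this by inserting pause steps simultaneously into both walks -- a player repeating a vertex changes neither coverage nor any distance -- giving the detour as many steps as $G_{l'}$ requires. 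Since by the first observation Alice's and Bob's $H$-excursions occupy disjoint time intervals, these local modifications can be performed one after another; the resulting walks stay inside $V(G)$, remain surjective on $V(G)$, and keep distance at least $2$, whence $\svSpan{G}\ge 2$, contradicting Theorem~\ref{th:interval}.

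The main obstacle is precisely this timing issue, which is what separates the present statement from Theorem~\ref{thm:naendkliko}: there the end-clique supplies a single vertex $x$ with $N_G[x]=K$ that is adjacent to every entry and exit vertex at once, so one parks the excursioning player at $x$ for the entire excursion with no bookkeeping. A minimal cut set need not admit such a universal-to-$K$ vertex, so the single parking spot is replaced by a detour through another component whose length must be matched to the excursion by padding. I would also note that $G$ cannot be complete here, as a complete graph has no cut set, so the degenerate case treated separately in Theorem~\ref{thm:naendkliko} does not arise.
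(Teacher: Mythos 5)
Your proposal is correct and follows essentially the same route as the paper's proof: both arguments excise each $H$-excursion by rerouting the excursioning player through a second component of $G-K$ (using the neighbours supplied by Lemma~\ref{l:minCutNeighboursEverywhere}) while the other player, confined to one component, replays his subwalk or pauses, producing walks on $G$ at distance at least $2$ that contradict Theorem~\ref{th:interval} --- indeed the paper implements exactly your padding idea by having Bob wait at $b_{j-1}$ while Alice crosses the second component. The only divergences are cosmetic: you obtain that $K$ is a clique from chordality of interval graphs (Dirac's separator theorem) rather than from the paper's Lemma~\ref{l:clique}, which lets you skip the paper's separate universal-vertex case; and since Remark~\ref{assumeAnyStart} only fixes one player's endpoints, to justify ``both walks start and end in $V(G)$'' you should choose Alice's start and end in $K$, which then forces Bob's endpoints outside $K\cup V(H)$ because $K$ is a clique completely joined to $V(H)$.
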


\begin{proof}
    If $G'$ contains a universal vertex, then $\rad(G')=1$, therefore $\svSpan{G'}=1$. 
    
    Suppose now, $G'$ has no universal vertices. Note, this also implies that $G$ has no universal vertices. If this is not the case, then since $K$ is a minimal cut set, it would have to contain every universal vertex, and therefore these vertices would also be universal in $G'$ by Definition \ref{def:aug}.
    
    By Theorem \ref{th:interval} we have $\svSpan{G}=1$, and since $G$ has no universal vertices, using Lemma \ref{l:clique} it follows that $K$ is a clique. Moreover, $K$ is a minimal cut set of $G$, therefore by Lemma \ref{l:minCutNeighboursEverywhere} every vertex of $K$ has a neighbour in each component of $G - K$.

    Towards contradiction suppose that $\svSpan{G'}>1$. Let $W_A: a_1, \ldots, a_k$ and $W_B: b_1, \ldots, b_k$ be two optimal strong walks for Alice and Bob, respectively, and by Remark \ref{assumeAnyStart} let  $a_1 \not \in V(H)$. Let $i$ and $j$, $i < j$, be the smallest integers such that $a_i \in K$, $a_\ell \in V(H)$ for every $ i < \ell < j$, and $a_j \in K$ or $j>k$. In other words, take the first sub-walk of $W_A$ such that Alice moves from $K$ to $H$, visits some vertices from $H$, and either stops in $H$ or returns to $K$. Note, if Alice is not already in $H$, she can only move to $H$ from a vertex in $K$. When Alice is in $a_i$, Bob is in $b_i \not \in K\cup V(H)$, because $d_{G'}(a_i, b_i)\geq \svSpan{G'} > 1$. Since $K$ is a cut set of $G$ it follows, that Bob is in some component of $G-K$, say $C_1$ and there exists at least one other component of $G-K$, say $C_2$. By Lemma \ref{l:minCutNeighboursEverywhere} there exist vertices $c_2^i, c_2^j \in V(C_2)$, such that $c_2^i \in N_G(a_i)$ and $c_2^j \in N_G(a_j)$.
    
    Let $W_{A'}$ and $W_{B'}$ be the walks obtained from $W_A$ and $W_B$, respectively, as follows. We start the change at the time $i$, with Alice in $a_i$ and Bob in $b_i$. While Bob moves through the vertices $b_{i+1}, b_{i+2}, \ldots, b_{j-1}$, Alice moves to and waits in $c_2^i$. Since they are in different components of $G-K$ at all times, they are also at distance at least 2 at all times. Now Bob waits in $b_{j-1}$, while Alice moves from $c_2^i$ to $c_2^j$ by only using vertices from $C_2$, thus maintaining the distance at least 2 from Bob. Finally, at the same time, Alice moves to $a_j$ and Bob moves to $b_j$ (if $j$ is a valid index, otherwise their walks end in the previous move). We can now repeat the same idea on the changed walks $W_{A'}$ and $W_{B'}$, as long as $W_{A'}$ contains vertices from $H$. After the procedure is finished, we have changed the original walks in such a way, that instead of visiting vertices of $H$ Alice walks on vertices outside of $H$ and only the vertices from $H$ were removed, while Bob visits the same set of vertices as in the original walk. Moreover, at all times they still keep the distance at least two. 

    Alice and Bob can now swap the roles and apply the same strategy for Bob. We end up with two walks, both through all the vertices of $G'-H$, i.e. $G$, while Alice and Bob are at the distance at least two at all times, thus $\svSpan{G}\geq 2$. A contradiction with the fact that $\svSpan{G}=1$, therefore $\svSpan{G'}\leq 1$. Since $G'$ is not the trivial graph, it follows that $\svSpan{G'}=1$.
\end{proof}

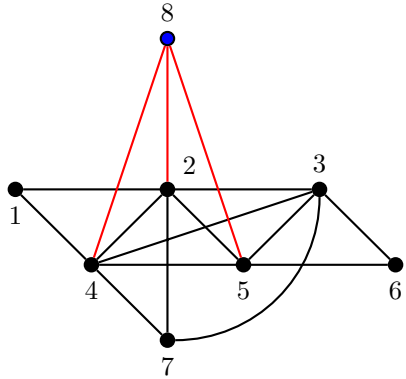
\begin{figure}[ht!]
	\begin{center}
 \begin{tikzpicture}[style=thick, main_node/.style={circle, fill=black,draw, inner sep=0pt, minimum size=5pt}]

\node[main_node] (1) at (-4, 2) [label=below:1]{};
\node[main_node] (2) at (-2, 2) [label=45:2]{};
\node[main_node] (3) at (0, 2) [label=above:3]{};
\node[main_node] (4) at (-3, 1) [label=below:4]{};
\node[main_node] (5) at (-1, 1) [label=below:5]{};
\node[main_node] (6) at (1, 1) [label=below:6]{};
\node[main_node] (7) at (-2, 0) [label=below:7]{};
\node[main_node,fill=blue] (8) at (-2, 4) [label=above:8]{};

\draw (1)--(3)--(6)--(4)--(1);
\draw (4)--(2)--(5)--(3);
\draw (3)--(4)--(7)--(2);
\draw (7) to[out=0,in=-90] (3);
\draw[red] (4)--(8)--(5);
\draw[red] (2)--(8);
\end{tikzpicture}
	\end{center}
	\caption{An augmentation $G$ of an interval graph with respect to a non-minimal separating clique and a trivial graph with $\svSpan{G}=2$}
	\label{fig:primerIGspan2}
\end{figure}

The results from Theorem \ref{thm:naendkliko} and \ref{thm:naMinSep} cannot be generalised to arbitrary cliques of interval graphs, not even separating cliques, as the graph in Figure \ref{fig:primerIGspan2} shows. In the figure the graph induced by the vertices $\{1, 2, \ldots, 7\}$, say $G$, is an interval graph which can be easily checked. The set $S=\{2, 4, 5\}$ is a clique, moreover it is a cut set of $G$; the graph $G-S$ has two components, induced by the sets $\{1\}$ and $\{3, 6, 7\}$, respectively.  Let $H$ be the trivial graph with the vertex set $\{8\}$. The graph shown in Figure \ref{fig:primerIGspan2} is the graph $\aug(G,S,H)$, where the graph $H$ is depicted with the blue colour, and the edges introduced by the augmentation procedure are drawn in red. By Theorem \ref{th:interval}, $\svSpan{G}=1$. Let $W_A=(1,2,8,4,7,3,6,5,6,6,6,5,8)$ and $W_B=(5,6,6,6,5,8,2,1,2,8,4,7,3)$ be Alice's and Bob's walks, respectively. Both of them visit each vertex of $\aug(G,S,H)$ at least once, moreover, at each step, the distance between Alice and Bob is 2, therefore $\svSpan{\aug(G,S,H)}\geq 2 \not = 1$.

We already know, that any graph $G$ is an induced subgraph of a graph $G'$ with $\svSpan{G'}=1$. Indeed an example of such graph $G'$ is a graph obtained from $G$ by adding a new vertex and connect it to all vertices of $G$. Then $\rad(G')=1$ and thus $\svSpan{G'}=1$. Theorems \ref{thm:naendkliko} and \ref{thm:naMinSep} also imply that an arbitrary graph $G$ is also an induced subgraph of a graph $G'$ with $\svSpan{G'}=1$ and arbitrary large radius.

\section{Minimum number of steps in optimal walks}\label{sec:minimNumSteps}

Given a graph $H$ and the movement rules, we are interested in finding the vertex span of $H$ and afterwards the optimal walks, with the respect to the movement rules, visiting all the vertices of $H$ keeping the maximal safety distance at all times. In order to find the optimal walks, we consider the product graph of $H$ with itself, the type of the product depending on the movement rules, and use the fact that optimal walks contain all the vertices of $H$ in both projections. Therefore, searching for the minimum number of steps in optimal walks is basically searching for the shortest walk in the product graph, that would visit all vertices of $H$ in both projections.

We have an exponential time algorithm that calculates the minimum number of steps in optimal walks, for any of the three movement rules. 

The algorithm works as follows: given graph $H$ and the movement rules, we first compute the corresponding span of $H$. With this in mind, we will use the existing algorithm by Banič and Taranenko given in~\cite{BaTa23}, which computes the chosen span of a given graph $H$ correctly in polynomial time considering any movement rule $R$ of the three possible: lazy, traditional and active, and address it as \verb|span(H, R)|. Denote by $G$ the graph obtained from the chosen product graph of $H$ with itself by removing all vertices $(u,v)$ such that $d_H(u,v) < \verb|span(H, R)|$. This notation is used throughout the remainder of the section. The algorithm in~\cite{BaTa23} implies that there is a connected component in $G$ such that it projects surjectively in both projections to $V(H)$. We call such a component a \emph{good component}. In each of the good components, we search for the shortest walk that covers all the vertices of $H$ in both projections by using the algorithm below which checks all possible walks. 
We then search for the minimum length over all good components.

We present the general concept of our algorithm in pseudo-code. First, we give auxiliary algorithms necessary for calculating the minimum number of steps in optimal walks.

\begin{algorithm}
\caption{goodComponents(G, H)}
\label{alg:goodC}
\KwIn{graph $G$, and the base graph $H$}
\KwOut{list of components that projects to $V(H)$ in both projections}
$gc = \emptyset$

\tcc{we use the existing function to find all the connected components of the graph}
$comp = listAllConnectedComponents(G)$

\ForEach{c in comp}
{
%
\If{c projects surjectively in both projections to $V(H)$}
{
$gc \gets gc \cup c$
}
}
\KwRet{gc}
\end{algorithm}
\begin{theorem}
Algorithm~\ref{alg:goodC} correctly identifies good components of the given graph $G$ surjectively projecting to $H$ in both projections in polynomial time.
\end{theorem}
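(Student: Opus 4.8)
The plan is to establish correctness and the polynomial time bound separately, since the statement makes two distinct claims about Algorithm~\ref{alg:goodC}. For correctness, I would argue that the algorithm returns exactly the set of good components, i.e.\ those connected components of $G$ whose vertex sets project surjectively onto $V(H)$ in each of the two coordinate projections. First I would recall that $G$ is obtained from a product of $H$ with itself by deleting every vertex $(u,v)$ with $d_H(u,v) < \verb|span(H, R)|$, so the vertices of $G$ are ordered pairs and the two projections $\pi_1(u,v)=u$ and $\pi_2(u,v)=v$ are well defined. The function \verb|listAllConnectedComponents(G)| returns all connected components of $G$; this is a standard subroutine whose correctness I would take as given. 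The main loop then inspects each component $c$ and adds it to $gc$ precisely when $c$ projects surjectively in both projections to $V(H)$. Hence a component is in the output if and only if it is a good component, which is exactly the definition, giving correctness.

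For the running time, the plan is to bound each ingredient by a polynomial in $|V(H)|$. The product graph has at most $|V(H)|^2$ vertices and at most $O(|V(H)|^4)$ edges, so building $G$ and deleting the far-apart pairs is polynomial once the distances $d_H(u,v)$ are known; these can be precomputed by all-pairs shortest paths on $H$ in polynomial time. Finding all connected components of $G$ is linear in the size of $G$ via a single graph search, hence polynomial in $|V(H)|$. The remaining cost is the per-component surjectivity test inside the \verb|ForEach| loop: for each component $c$ I would scan its vertices once, marking which elements of $V(H)$ appear as first coordinates and which appear as second coordinates, and then check whether both marked sets equal $V(H)$. This costs $O(|c| + |V(H)|)$ per component, and since the components are disjoint their total size is at most $|V(G)| \leq |V(H)|^2$, so the whole loop is polynomial. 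Summing the ingredients yields a polynomial overall bound.

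The step I expect to require the most care is not any single hard calculation but rather pinning down what surjective projection means at the level of vertex sets and confirming the existence guarantee stated earlier in the excerpt, namely that the result of Banič and Taranenko ensures at least one good component exists. I would note that the theorem only asserts that Algorithm~\ref{alg:goodC} correctly \emph{identifies} good components, so I need not reprove existence; it suffices to verify that the \verb|If| condition is logically equivalent to the defining property of a good component, and that the test can be implemented so as to respect the claimed polynomial bound. The mild subtlety is ensuring the surjectivity check is onto all of $V(H)$ and not merely onto the projection of $V(G)$, but since $G$ is a subgraph of the product and every component is examined, a component qualifies exactly when its two coordinate images each exhaust $V(H)$, which is what the condition verifies.
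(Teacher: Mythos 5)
Your proposal is correct and follows essentially the same route as the paper's proof: enumerate connected components via a standard graph search (the paper uses BFS through the \emph{networkx} component function, running in $O(N+|E(G)|)=O(N^2)$ for $N=|V(G)|$), then verify the two coordinate projections of each component against $V(H)$ by a linear scan. If anything, you are slightly more thorough than the paper, which treats correctness of the surjectivity test as immediate and only accounts for the running time; your per-component bound $O(|c|+|V(H)|)$ with disjointness of components matches the paper's $O(N^2)$ conclusion, and your inclusion of the cost of building $G$ is harmless but strictly unnecessary, since $G$ is an input to Algorithm~\ref{alg:goodC} and is constructed elsewhere.
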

\begin{proof}
Denote by $N=|V(G)|$. To obtain the list of connected components of $G$, in line $2$ of Algorithm~\ref{alg:goodC}, we use the function from \emph{networkx} package in Python. For each vertex $v$ in $G$, it checks whether it belongs to already visited set of vertices, and if not, then it applies the BFS algorithm from that vertex $v$ to obtain the component containing $v$. This algorithm in total runs in $O(N+E(G))=O(N^2)$ time. Therefore, the components can be found in polynomial time. Now, we need to check whether any component projects surjectively to $V(H)$ in both projections. This is checked in lines $3-5$ of the algorithm and can be checked in time $O(N)$. In total, this algorithm runs in time $O(N^2)$, which concludes the proof.
\end{proof}










\begin{algorithm}
\caption{minLenWalk(G, H, x, walk, visited, visitedE)}
\label{alg:mnLenPath}
\KwIn{Graph $G$, graph $H$ (the base graph), starting vertex $x$, given $walk$ and list of visited vertices $visited$, and list of visited edges $visitedE$}
\KwOut{shortest walk and its length}

$ neighbours = \texttt{ListAllNeighbours(x)}$

$mn \gets \infty$
 
$minWalk \gets walk$

\tcc{check whether all the neighbours of x are visited, and if they are, try using a new edge, to minimize the length of the walk}
\ForEach{node a in neighbours}
{
    \uIf{a $\not \in$ visited}
    {
        $walk \gets walk \cup \{a\}$

        $visited \gets visited \cup \{a\}$

        $visitedE \gets visitedE \cup \{(v, a)\}$

        \uIf{walk projects surjectively in both projections to $V(H)$}
        {
            \If{$length(walk) < mn$}
            {
                $minWalk = walk$

                $mn = length(walk)$
                
            }
        }
        \Else
        {
            $[ml, mw] = \texttt{minLenWalk(G, H, a, walk, visited, visitedE)}$

            \If{$ml < mn$}
            {
                $minWalk = mw$

                $mn = ml$
                
            }
        }

        remove $a$ from $walk$ and $visited$, and $(v, a)$ from $visitedE$
    }
     \Else
     {
        \If{(a,v) $\not \in$ visitedE}
         {
             $walk \gets walk \cup \{a\}$

             $visited \gets visited \cup \{a\}$

             $visitedE \gets visitedE \cup \{(a,v)\}$

            \uIf{walk projects surjectively in both projections to $V(H)$}
            {
                \If{$length(walk) < mn$}
                {
                    $minWalk = walk$

                    $mn = length(walk)$
                
                }
            }
            \Else
            {
                $[ml, mw] = \texttt{minLenWalk(G, H, a, walk, visited, visitedE)}$

                \If{$ml < mn$}
                {
                    $minWalk = mw$

                    $mn = ml$
                
                }
        }
        remove $a$ from $walk$ and $visited$ and $(a,v)$ from $visitedE$
        }
        \Else{
            \tcc{try going backwards along the edges of this walk until a new neighbour is found and continue the walk from there}
        }
    }
}
\KwRet{$mn, minWalk$}
\end{algorithm}

\begin{theorem}
    Algorithm~\ref{alg:mnLenPath} returns a shortest walk in $G$ that projects surjectively in both projections to $V(H)$ and its length correctly, starting from the given vertex $v$ in exponential time.
\end{theorem}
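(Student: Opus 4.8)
The plan is to prove the two assertions separately: correctness (the returned walk really is a shortest walk of $G$ whose two projections cover $V(H)$) and the exponential running time. For correctness I would first dispatch \emph{soundness}: every object the recursion ever records in \texttt{minWalk} is a genuine walk of $G$, because each recursive call extends the current walk only to a \emph{neighbour} of the current vertex, so consecutive vertices of $walk$ are always adjacent in $G$; and surjectivity of both projections is checked explicitly (the \texttt{walk projects surjectively in both projections to $V(H)$} test) before any candidate is compared against the running minimum. Hence whatever is returned is a valid walk projecting surjectively in both coordinates, and the reported value $mn$ is exactly its number of vertices.

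The core of the argument is \emph{completeness and optimality}, which is where I expect the main difficulty. The key fact I would isolate as a lemma is that some shortest surjective walk traverses no \emph{directed} edge of $G$ more than once. Suppose a shortest surjective walk $W: w_0, \dots, w_k$ used the directed edge $(x,y)$ at two positions $p < q$, i.e.\ $w_p = w_q = x$ and $w_{p+1} = w_{q+1} = y$. Reversing the segment $w_{p+1}, \dots, w_q$ produces the walk $w_0, \dots, w_p, w_q, w_{q-1}, \dots, w_{p+1}, w_{q+1}, \dots, w_k$, which is again a walk since every reversed step is an edge of $G$ and the two new junctions $w_p = w_q = x$ and $w_{p+1} = w_{q+1} = y$ are pairs of equal vertices. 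Collapsing these two coincidences removes one occurrence of $x$ and one of $y$, leaving a strictly shorter walk that visits exactly the same set of vertices (the deleted occurrences of $x$ and $y$ still appear elsewhere) and is therefore still surjective in both projections. This contradicts the minimality of $W$, so a shortest surjective walk is directed-edge-simple and has length at most $2|E(G)|$.

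With this lemma in hand, completeness reduces to verifying that the recursion enumerates every directed-edge-simple walk out of the start vertex. At each call the algorithm branches over all neighbours $a$ of the current vertex, pursuing $a$ whenever it is new, or whenever the directed edge to $a$ is not yet in \texttt{visitedE}, and otherwise backtracking along the current walk to resume from an earlier vertex that still has an unused outgoing edge. Since no directed edge is ever reused, every walk the recursion builds is directed-edge-simple; conversely, because a shortest surjective walk is directed-edge-simple by the lemma, some branch of the recursion follows it (or a walk of the same length), and as every surjective candidate is tested against $mn$, the algorithm returns one of minimum length with the correct value. I would take particular care over the ``go backwards'' branch, since it is the only place where the enumeration could silently omit a valid directed-edge-simple continuation; establishing that this branch reaches every earlier vertex with a free edge is the delicate point of the completeness proof.

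Finally, for the running time I would bound the recursion tree directly: by the \texttt{visitedE} bookkeeping its depth is at most $2|E(G)|$, and the branching factor at each node is at most the maximum degree $\Delta(G)$, so the number of explored walks, and hence the total work, is bounded by an exponential in $|E(G)|$. This yields the claimed exponential complexity; since computing the span via \texttt{span(H, R)} and extracting the good components are polynomial, the exhaustive search analysed here is exactly the exponential bottleneck.
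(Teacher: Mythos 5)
Your proposal is correct, but it argues along a genuinely different line than the paper. The paper's proof of this theorem is an operational walkthrough of Algorithm~\ref{alg:mnLenPath}: it recounts the branches of the recursion (extend to an unvisited vertex, reuse a visited vertex along an unvisited edge, back up along the current walk), asserts that this enumerates all candidate walks, and bounds the total work by $O(|V(G)|^{|V(G)|})$. It never addresses the two points on which correctness actually hinges: why the \texttt{visitedE} pruning --- never traversing a directed edge twice --- cannot discard \emph{every} shortest surjective walk, and why walks of bounded length suffice to make the search finite at all. Your reversal lemma supplies exactly this missing justification: if a shortest surjective walk used the directed edge $(x,y)$ at positions $p<q$, then reversing $w_{p+1},\dots,w_q$ and collapsing the two coincidences at $x$ and $y$ yields a strictly shorter walk through the same vertex set (note $q\geq p+2$ since $G$ has no loops, so the surgery is well defined, and the argument also covers the boundary case $q+1=k$), contradicting minimality. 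Hence every shortest surjective walk is directed-edge-simple, the edge pruning is lossless, the recursion depth is at most $2|E(G)|$, and your explicit bound $\Delta(G)^{2|E(G)|}$ is a sharper and better-justified version of the paper's crude count. One further observation dissolves the point you flag as delicate: since the target shortest walk never repeats a directed edge, each of its steps leads either to an unvisited vertex or to a visited vertex along an unused directed edge, so the recursion follows it entirely within the first two branches; the ``go backwards'' branch --- which the pseudocode leaves as a bare comment, and whose bookkeeping even records $(v,a)$ in one branch but tests and records $(a,v)$ in the other --- is never needed for completeness, only as a heuristic continuation. Finally, your implicit use of the fact that a shortest surjective walk has no surjective proper prefix is what licenses the algorithm's truncation upon first reaching surjectivity; it would be worth stating explicitly, but it is immediate.
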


\begin{proof}
    
    The algorithm first looks for all the neighbours of $v$ in $G$. Then, the algorithm searches for the continuation of the walk. For each neighbour $a$ in the list that is not already visited, the walk is prolonged with $a$, and it is added to the visited list, so that the algorithm does not go in loops at first, and the edge is marked as visited edge list. If this new walk projects surjectively to $V(H)$ in both projections (this is done by a loop checking that every vertex of $H$ appears as first or second coordinate in the walk), then the length of the obtained walk is compared to what is currently the shortest walk projecting surjectively in both projections to $V(H)$ starting at $v$ and declared the new shortest walk if it is shorter. If the walk ending with $a$ does not project surjectively to all the vertices of $H$ in both projections, then starting from $a$, a new suitable shortest walk is searched for. Also, if a vertex is already visited, but there is a non visited edge to this vertex, the algorithm tries using it and tries to continue the walk by going in a cycle to other vertices. Finally, if this is also not possible, the algorithm tries going backwards along the walk until new unvisited vertex is found. As this process repeats for all the vertices in the neighbourhood, the total duration is upper bounded by $O(|V(G)|^{|V(G)|})$. This concludes the proof.
\end{proof}

\begin{algorithm}
\caption{minSteps(H,R)}
\label{alg:minSteps}
\KwIn{graph $H$, movement rules $R$}
\KwOut{The walk of shortest length projecting surjectively in both projections to $V(H)$ represented as a list}

\tcc{Depending on the rule set R, calculate the product}

\uIf{R is a traditional movement rule}{$G' = H \boxtimes H $}
\Else{
    \uIf{R is an active movement rule} {$G' = H \times H$}
    \Else {$G' = H \BoxProduct H$}
    }

\tcc{Compute the span of H, depending on the rule, and remove all vertices from G' whose distance is smaller than the span, i.e. construct G}

$sp = \verb|span(H, R)|$

$I = \emptyset$

\ForEach{(u,v) in $V(G')$}
{
    \If{$dist(u,v) \geq sp$}
    {add $(u,v)$ to $I$}
}

\tcc{Compute the graph induced by the vertices in I}

$G = G'\left[I\right]$

\tcc{Find the components that project surjectively to V(H) in both projections}
$ comp = \texttt{goodComponents(G)}$

  $minP = \emptyset$
  
  $visited = \emptyset$
  
\ForEach{c in comp}
{  

  \ForEach{node x in c}
  {
  \tcc{mp denotes the suitable shortest walk starting at x, ml denotes its length}
  [mp, ml] = \texttt{minLenWalk(G, H, x, minP, visited)}
  
  \If{mp projects surjectively to $V(H)$}
  {\If {$ml < |minP|$} {$minP = mp$}}
  }
}
\KwRet{minP}
\end{algorithm}

\begin{theorem}
    Algorithm~\ref{alg:minSteps} correctly finds a shortest walk projecting surjectively to $V(H)$ in both projections under the rules $R$ in exponential time. 
\end{theorem}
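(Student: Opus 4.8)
The plan is to reduce the correctness of Algorithm~\ref{alg:minSteps} to two ingredients: the correspondence between optimal walks under the movement rules $R$ and walks in the filtered product graph $G$, and the already-established correctness of the subroutines \texttt{goodComponents} and \texttt{minLenWalk}. The conceptual core, laid out at the beginning of this section, is that a synchronised pair of walks for Alice and Bob in $H$ is the very same object as a single walk in the appropriate product of $H$ with itself: the two coordinates of a vertex $(u,v)$ record the two players' positions, and an edge of the product encodes exactly one legal joint move. For the traditional rules the relevant product is the strong product $H \boxtimes H$ (each player independently stays or moves), for the active rules it is the direct product $H \times H$ (both players must move), and for the lazy rules it is the Cartesian product $H \BoxProduct H$ (exactly one player moves); these are precisely the three products selected in the opening lines of the algorithm.

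First I would make this correspondence precise and length-preserving: a walk $(w_0,\dots,w_m)$ in $G'$ projects in its two coordinates to a pair of walks $W_A, W_B$ of equal length in $H$, the $i$-th joint move being legal under $R$ exactly because $w_{i-1} w_i \in E(G')$. Filtering $G'$ down to $G = G'[I]$, where $I$ retains only the vertices $(u,v)$ with $d_H(u,v) \ge sp$ for the value $sp$ returned by \texttt{span(H, R)}, guarantees that every vertex on every walk in $G$ keeps the two players at distance at least the span. Surjectivity of a walk in $G$ onto $V(H)$ in each projection is, by definition, the statement that both $W_A$ and $W_B$ visit all vertices of $H$. Hence a shortest walk in $G$ projecting surjectively in both coordinates corresponds to a pair of optimal walks using the fewest steps, and conversely; since $sp$ is computed correctly in polynomial time by the algorithm of \cite{BaTa23}, the filter $I$ is exactly the intended vertex set.

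Next I would assemble the global search. Any walk in $G$ is connected, hence lies inside a single connected component of $G$; to project surjectively in both coordinates it must lie in a component that itself projects surjectively in both coordinates, i.e.\ a good component in the sense of Algorithm~\ref{alg:goodC}. By the correctness of \texttt{goodComponents} these are enumerated correctly, and by Remark~\ref{assumeAnyStart} the starting vertex may be chosen arbitrarily, so it suffices to invoke \texttt{minLenWalk} from every vertex of every good component. The correctness theorem for \texttt{minLenWalk} guarantees that each such call returns a shortest surjective walk starting at that vertex; taking the minimum over all starting vertices and all good components therefore yields a globally shortest walk projecting surjectively in both projections, as claimed.

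For the running time I would bound each ingredient separately. The product $G'$ has $O(|V(H)|^2)$ vertices, and its construction, the computation of $sp$, the filtering to $G$, and the call to \texttt{goodComponents} are all polynomial. The dominating cost is the repeated invocation of \texttt{minLenWalk}, called at most once per vertex of $G$, i.e.\ $O(|V(H)|^2)$ times, each running in time $O(|V(G)|^{|V(G)|})$ by its correctness theorem; a polynomial factor times an exponential is still exponential, so Algorithm~\ref{alg:minSteps} runs in exponential time. The main obstacle I anticipate is the first step: pinning down the length-preserving, rule-respecting bijection between joint walks and product walks uniformly for all three products (in particular, checking that the three edge relations encode precisely the stay/move options of each rule set) and verifying that the distance filter neither discards nor introduces a legal configuration. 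Once this correspondence is secured, the remaining steps are bookkeeping over the previously proved subroutine guarantees.
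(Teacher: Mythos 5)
Your proposal is correct and takes essentially the same approach as the paper: the paper's proof likewise traces the algorithm stage by stage (product chosen per the rules, span via \cite{BaTa23}, distance filtering to $G$, good components, exhaustive \texttt{minLenWalk} calls), inherits correctness from the subroutine theorems, and notes that the exponential \texttt{minLenWalk} dominates the otherwise polynomial cost. You additionally make explicit the length-preserving correspondence between joint walks under $R$ and walks in the filtered product, which the paper only states informally at the start of the section, so your write-up is a more detailed rendering of the same argument rather than a different one.
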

\begin{proof}
Let $H$ be a given graph with $n$ vertices. Algorithm~\ref{alg:minSteps} first computes the desired product depending on the rules (lines 1-7). Afterwards, using the function from~\cite{BaTa23}, the span of the graph $H$ is computed (line 8). Now, assuming we already have all pairs distances computed, we construct $G$ (lines 9-13). Then, only the components that project surjectively to $V(H)$ in both projections are considered for searching for a shortest walk that projects in both projections to $V(H)$ using Algorithm~\ref{alg:mnLenPath}, this implies the minimum number of steps required.

Computing the product graph and $G$ is done in time $O(n^4)$, as shown in~\cite{BaTa23}. Finding good components can be done also in polynomial time, as shown for Algorithm~\ref{alg:goodC}. However, this algorithm runs in exponential time, as the Algorithm~\ref{alg:mnLenPath} used for finding the minimum number of steps runs in exponential time. 
\end{proof}

\section*{Acknowledgements}
This work was initiated on the first Workshop on Games on Graphs on Rogla, Slovenia in 2023. The first and the third author acknowledge the financial support from the Slovenian Research Agency (research core funding No. P1-0297, also projects N1-0285 and BI-US/22-24-121). The second author acknowledges the partial financial support of the Provincial Secretariat for Higher Education and Scientific Research, Province of Vojvodina (Grant No.~142-451-2686/2021) and of Ministry of Science, Technological Development and Innovation of Republic of Serbia (Grants 451-03-66/2024-03/200125 \& 451-03-65/2024-03/200125).


\begin{thebibliography}{99}
\bibitem{BaTa23}
I. Banič, A. Taranenko, Span of a Graph: Keeping the Safety Distance, Discrete Math.\ Theor.\ Comput.\ Sci. 25:1 (2023). 

\bibitem{bls-99} 
 A. Brandst\"{a}dt, V. B. Le, J. P. Spinrad:  Graphs classes: A
	survey, SIAM, Philadelphia, 1999.

 \bibitem{Erceg23} G. Erceg, A. Šubašić, T. Vojković, Some results on the maximal safety distance in a graph, FILOMAT, 37(15), 5123--5136.

\bibitem{Gim88} J. Gimbel, End vertices in interval graphs, Discrete Appl. Math. 21 (1988)
257 -- 259.
  
\bibitem{Lelek} A. Lelek, Disjoint mappings and the span of spaces, Fund. Math. 55 (1964), 199 -- 214.

\bibitem{mcmc-99} 
T. A. McKee, F. R. McMorris: Topics in intersection graph theory, SIAM, Philadelphia, 1999.

\bibitem{SuVo24} A. Šubašić, T. Vojković, Vertex Spans of Multilayered Cycle and Path Graphs. Axioms 13 (2024) 236. 

\bibitem{SuVoAX} A. Šubašić, T. Vojković, Edge spans and the minimal number of steps for keeping the safety distance, arXiv:2306.06714 [math.CO].



 
\end{thebibliography}
\end{document}